\def\red#1{{\textcolor{red}{#1}}}
\newtheorem{prop}{Proposition}
\newtheorem{thm}[prop]{Theorem}
\theoremstyle{definition}
\newtheorem*{ex}{Example}
\newtheorem*{exs}{Examples}
\newtheorem*{app}{Applications}
\newtheorem*{term}{Terminology}
\newtheorem*{defn}{Definition}
\newtheorem*{ack}{Acknowledgements}
\newcommand{\co}{\colon\thinspace}
\newcommand{\C}{{\mathbb{C}}}
\newcommand{\CP}{{\mathbb{CP}}}
\newcommand{\N}{{\mathbb{N}}}
\newcommand{\R}{{\mathbb{R}}}
\newcommand{\RP}{{\mathbb{RP}}}
\newcommand{\frakp}{{\mathfrak{p}}}
\newcommand{\bfx}{{\mathbf{x}}}
\newcommand{\Z}{{\mathbb{Z}}}
\newcommand{\rmd}{{\mathrm{d}}}
\newcommand{\rme}{{\mathrm{e}}}
\newcommand{\rmi}{{\mathrm{i}}}
\newcommand{\xist}{{\xi_{\mathrm{st}}}}
\DeclareMathOperator{\id}{\mathrm{id}}
\DeclareMathOperator{\Int}{\mathrm{Int}}
\begin{document}

\author{Hansj\"org Geiges}
\address{Mathematisches Institut, Universit\"at zu K\"oln,
Weyertal 86--90, 50931 K\"oln, Germany}
\email{geiges@math.uni-koeln.de}

\title{How to depict $5$-dimensional manifolds}

\date{}

\begin{abstract}
We usually think of $2$-dimensional manifolds as surfaces embedded
in Euclidean $3$-space. Since humans cannot visualise Euclidean
spaces of higher dimensions, it appears to be impossible to give
pictorial representations of higher-dimensional manifolds.
However, one can in fact encode the topology of a surface
in a $1$-dimensional picture. By analogy, one can draw
$2$-dimensional pictures of $3$-manifolds (Heegaard diagrams),
and $3$-dimensional pictures of $4$-manifolds (Kirby diagrams).
With the help of open books one can likewise represent at least some
$5$-manifolds by $3$-dimensional diagrams, and contact geometry can be
used to reduce these to drawings in the $2$-plane.

In this paper, I shall explain how to draw such pictures and how to use them
for answering topological and geometric questions. The work on
$5$-manifolds is joint with Fan Ding and Otto van Koert.
\end{abstract}

\subjclass[2010]{57R65, 57M25, 57R17}

\maketitle
\section{Introduction}
\label{section:intro}
A \emph{manifold} of dimension $n$ is a topological space $M$
that locally `looks like' Euclidean $n$-space~$\R^n$; more precisely,
any point in $M$ should have an open neighbourhood
homeomorphic to an open subset of~$\R^n$. Simple examples (for $n=2$)
are provided by surfaces in~$\R^3$, see Figure~\ref{figure:surfaces}. Not all
$2$-dimensional manifolds, however, can be visualised in
$3$-space, even if we restrict attention to compact manifolds.
Worse still, these pictures `use up' all three spatial dimensions
to which our brains are adapted by natural selection.

\begin{figure}[h]
\labellist
\small\hair 2pt
\endlabellist
\centering
\includegraphics[scale=.48]{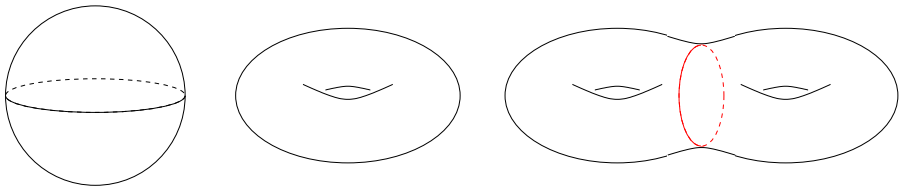}
  \caption{The $2$-sphere $S^2$, the $2$-torus $T^2$, and the surface
$\Sigma_2$ of genus two.}
  \label{figure:surfaces}
\end{figure}

So any attempt to visualise higher-dimensional manifolds
seems to be doomed. As regards $3$-dimensional manifolds, there
might be some hope to get an understanding from within, that is,
if we imagine ourselves travelling inside such a space.
This is not easy; even the critical Immanuel Kant seems to have taken
it as a given that a space which locally looks like $\R^3$ must needs
be $\R^3$ --- at least, that's how I interpret his dictum that
the space we live in is not amenable to empirical study:
``Space is not a conception which has been derived from outward
experiences. [...]
Space then is a necessary representation \emph{a priori}, which serves for
the foundation of all external intuitions. [...]
Space is represented as an infinite given quantity.''\footnote{``Der Raum
ist kein empirischer Begriff, der von \"au{\ss}eren Erfahrungen abgezogen
worden. [...] Der Raum ist eine notwendige Vorstellung, a priori, die
allen \"au{\ss}eren Anschauungen zum Grunde liegt. [...]
Der Raum wird als eine unendliche gegebene Gr\"o{\ss}e vorgestellt.''
\cite{kant56}, translation from~\cite{kant52}.}

Nonetheless, one can exercise one's
imagination. A good place to start is the beautiful book by
Weeks~\cite{week02}. See also \cite{pete79}, where it is
argued that Dante described an internal view of the
$3$-sphere in his \emph{Divina Commedia};
cf.~\cite{geig97,osse95}. A flight simulator for travel in
various $3$-dimensional manifolds can be found at\\
\texttt{http://www.geometrygames.org/CurvedSpaces/index.html.en}.\\
For more on the topology of the universe see the proceedings~\cite{tuc98},
which --- rather intriguingly --- contains a paper `Topology and the
universe' by Gott.

But how, then, is it possible to get a structural understanding of
higher-dimensio\-nal manifolds if we cannot visualise them
from without? The answer lies in a dimensional reduction
of the representation of surfaces. I shall describe how
to represent compact $2$-dimensional manifolds by
$1$-dimensional diagrams. To `represent' here means that the
diagram contains complete information about the global
topology of the surface. Moreover, there are rules for
manipulating such diagrams that enable us to prove which diagrams
represent homeomorphic surfaces.

Once this dimensional reduction has been grasped, one can proceed
by analogy up to $4$-dimensional manifolds, which should then
be representable by $3$-dimensional diagrams. Up to this
point, the material presented here is classical. The $1$-dimensional
approach to the classification of surfaces, which I have not found
discussed in detail elsewhere, has been tried and tested in
a lecture course on the geometry and topology of surfaces.

In Section~\ref{section:dim5} I shall present a
diagrammatic approach to the topology of $5$-manifolds
developed jointly with Fan Ding and Otto van Koert. The idea
here is to restrict attention to a class of $5$-manifolds
that can be described as special types of so-called
`open books'. These are decompositions of $5$-manifolds
into a collection of $4$-dimensional `pages' glued along
a $3$-dimensional `binding' that constitutes the
common boundary of these $4$-manifolds (where the
manifold looks like a closed half-space in~$\R^4$).
Under suitable assumptions, it suffices
to present a diagram of the $4$-dimensional page in order to
understand the topology of the $5$-manifold.
With a little help from contact geometry, as explained in the final section,
we can further simplify the $3$-dimensional
diagram of the page to a $2$-dimensional one.

\vspace{2mm}

This paper is an extended version of a colloquium talk I
have given at a number of universities. I have tried to keep the
colloquial style of the original presentation, but I have added
various technical details where it seemed appropriate for a
written account.
\subsection{Manifolds}
One usually postulates that a manifold $M$ should not only look
locally like~$\R^n$, but also that
\begin{enumerate}
\item[(M1)] $M$ is a topological Hausdorff space, i.e.\ any two
distinct points lie in disjoint neighbourhoods, and
\item[(M2)] the topology of $M$ has a countable base, i.e.\
there is a countable family of open subsets such that any
open subset of $M$ is a union of sets from this family.
\end{enumerate}

These requirements are largely a matter of technical convenience
and can safely be ignored for the purposes of this paper.
Condition (M1) prevents pathological examples such as a line with a double
point. Take $M$ to be a copy of the real line~$\R$, together
with an additional point $\ast$. Declare the topology on
$M=\R\cup\{\ast\}$ by $\R$ being an open subset of $M$, and
neighbourhoods of $\ast$ to be sets of the form $(U\setminus\{0\})
\cup\{\ast\}$, where $U\subset\R$ is a neighbourhood of the
origin $0\in\R$. This space $M$ is locally homeomorphic to~$\R$,
but not Hausdorff.

Condition (M2) will be redundant in this paper as we shall only be
concerned with compact manifolds. In general, one imposes this condition
to guarantee, for instance, metrisability of manifolds.
\begin{exs}
(1) The surface $\Sigma_2$
of genus two in Figure~\ref{figure:surfaces} can be obtained
from two copies of a $2$-torus $T^2$ as follows: remove the interior
of a small disc $D^2\subset T^2$ from each torus, then glue the resulting
circle boundaries $\partial (T^2\setminus\Int(D^2))=S^1$ by a
homeomorphism.

This operation is called a \emph{connected sum} and denoted by the
symbol $\#$. So $\Sigma_2$ is homeomorphic
to $T^2\# T^2$. Likewise, one can define the surface of genus $g\in\N$
as the connected sum of $g$ copies of $T^2$.

\vspace{1mm}

(2) There are surfaces that cannot so easily be visualised in $3$-space.
For instance, the real projective plane $\RP^2$ is defined as the
quotient space $S^2/\bfx\sim -\bfx$ obtained from the $2$-sphere
by identifying antipodal points. This space carries a natural topology,
the so-called \emph{quotient topology}, defined as follows. We have a
two-to-one projection map
\[ \begin{array}{rrcl}
\pi\co & S^2  & \longrightarrow & \RP^2\\
       & \bfx & \longmapsto     & [\bfx]
\end{array} \] 
sending each point in $S^2$ to its equivalence class in $\RP^2$.
A subset $U\subset\RP^2$ is called \emph{open} if its inverse
image $\pi^{-1}(U)$
is open in $S^2$ in the usual topology inherited from the inclusion
$S^2\subset\R^3$. This defines a topology on $\RP^2$; it is the
topology with the largest number of open sets for which the
projection $\pi$ is continuous.

The process of gluing, as in the connected sum construction,
has a similar description as a quotient. The resulting topology
is the obvious one you would expect from gluing two pieces of
paper along their edges.

The real projective plane is indeed a $2$-dimensional manifold:
an open neighbourhood of a point $\bfx\in S^2$ that does not contain
any pair of antipodal points descends homeomorphically
to a neighbourhood of $[\bfx] \in\RP^2$.

I claim that $\RP^2$ may be thought of as the gluing of a
$2$-disc and a M\"obius band along their circle boundary,
see Figure~\ref{figure:rp2}. Think of $S^2$ as being made up
of two polar caps, which are homeomorphic copies
of $D^2$, one being the antipodal image of the other.
In the quotient space $S^2/\bfx\sim-\bfx$ we can take one of these two
discs to represent the equivalence classes of points coming from
the polar caps. Likewise, the equivalence classes of points coming
from the band around the equator are represented by points in
one half of that band; when the ends of that strip are glued
with the antipodal map, this creates a M\"obius band. As we go once
around the boundary of a polar cap, we also go once along the
boundary of the M\"obius band.

\begin{figure}[h]
\labellist
\small\hair 2pt
\endlabellist
\centering
\includegraphics[scale=.45]{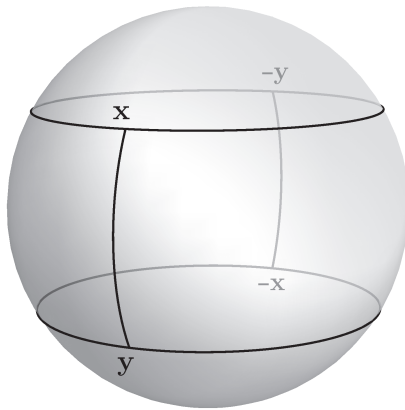}
  \caption{$\RP^2$ obtained by gluing a disc and a M\"obius band.}
  \label{figure:rp2}
\end{figure}

The real projective plane $\RP^2$ is an example of a
\emph{non-orientable} manifold: as one goes once along the
central circle in a M\"obius band, an oriented $2$-frame
returns with the opposite orientation. Moreover,
the real projective plane can be realised as a surface in $\R^3$
only at the price of allowing self-intersections. Such a realisation was
discovered by T.~Boy, see~\cite[\S\ 47]{hicv52}.
These two facts are correlated;
an elegant argument due to Samelson~\cite{same69} shows that
any compact hypersurface in Euclidean space is orientable.

\vspace{1mm}

(3) The next surface we can construct is the connected sum $\RP^2\#\RP^2$.
Since the removal of an open disc in $\RP^2$ leaves us with
a M\"obius band, this connected sum is the same as gluing
two M\"obius bands along the boundary. The resulting surface is
known as a Klein bottle. This explains why every Klein bottle bought at
\texttt{kleinbottle.com} comes with a product warning that, if dropped,
it may shatter into two M\"obius bands.

\vspace{1mm}

(4) In a first course on topology one learns that the connected sums
$\Sigma_g=\#_gT^2$, $g\in\N_0$ (the empty connected sum being~$S^2$, the
neutral element for the connected sum), and $\#_h\RP^2$, $h\in\N$,
constitute a complete list (without duplications) of the compact
surfaces. Where, you may ask, is the surface $T^2\#\RP^2$
or any other `mixed' connected sum in that list?
I shall give an answer to this question in Section~\ref{section:surfaces},
using the $1$-dimensional diagrammatic language developed there.

\vspace{1mm}

(5) A simple example of a compact $n$-dimensional manifold is the
$n$-sphere
\[ S^n:=\{\bfx\in\R^{n+1}\co \|\bfx\|=1\}.\]
Stereographic projection from any point $N$ of the sphere (regarded as
the north pole) onto the corresponding equatorial plane defines
a homeomorphism
\[ S^n\setminus\{N\}\stackrel{\cong}{\longrightarrow}\R^n.\]
\end{exs}
\subsection{Handle decompositions}
\label{subsection:handle}
We write $D^k$ for the closed $k$-dimensional unit disc (or ball),
\[ D^k:=\{\bfx\in\R^k\co \|\bfx\|\leq 1\}.\]
This is a $k$-dimensional manifold \emph{with boundary}; points $\bfx\in D^k$
with $\|\bfx\|=1$ have a neighbourhood in $D^k$ that looks
like an open subset in the half-space $\R^k_+=\{\bfx\in\R^k\co
x_k\geq 0\}$. The boundary is denoted
by~$\partial$; for instance, $\partial D^k=S^{k-1}$.

\begin{term}
In the literature, the term `manifold' is often used in the
wider sense so as to include manifolds with boundary. In this paper,
manifolds are always understood to be without boundary, unless
specified otherwise. Occasionally I shall use the standard term
\emph{closed manifold} for a \emph{compact} manifold
\emph{without} boundary when I wish to emphasise those attributes.
\end{term}

An $n$-dimensional $k$-handle is  a product $h_k:= D^k\times D^{n-k}$;
the number $k\in \{0,\ldots, n\}$ is called the \emph{index} of the handle.
I suppress $n$ from the notation for the handle, as the dimension will
be clear from the context. Up to homeomorphism, an $n$-dimensional
handle is of course simply a copy of the $n$-disc~$D^n$; what determines
the index of a handle is how it is used to build a manifold by
gluing handles.

A \emph{handle decomposition} of an $n$-dimensional manifold
$M^n$ is a way to write it as a union of handles, where we start
with a disjoint collection of $n$-discs, which are the zero handles
in the decomposition, and then successively attach handles, where a
handle of index $k$ is attached to the boundary of the
compound $X$ (or \emph{handlebody}) of all previous handles
along the part $\partial D^k\times D^{n-k}=S^{k-1}\times D^{n-k}$
of its boundary, see Figure~\ref{figure:k-handle}.
We write $\partial_- h_k$ for this part of
the boundary and call it the \emph{lower boundary} of the
$k$-handle. Similarly, we have the \emph{upper boundary}
$\partial_+h_k=D^k\times\partial D^{n-k}=D^k\times S^{n-k-1}$.
Figure~\ref{figure:k-handle} also shows the \emph{core disc}
$D^k\times\{0\}$ and the \emph{belt sphere} $\{0\}\times S^{n-k-1}$.

\begin{figure}[h]
\labellist
\small\hair 2pt
\pinlabel $h_k$ at 193 201
\pinlabel ${\partial X}$ at 634 204
\pinlabel $X$ at 550 38
\pinlabel ${\partial_-h_k=\partial D^k\times D^{n-k}}$ [t] at 360 30
\pinlabel ${\partial_+h_k=D^k\times S^{n-k-1}}$ [l] at 553 329
\pinlabel ${\{0\}\times S^{n-k-1}}$ [r] at 211 325
\pinlabel ${D^k\times\{0\}}$ [l] at 600 285
\endlabellist
\centering
\includegraphics[scale=.4]{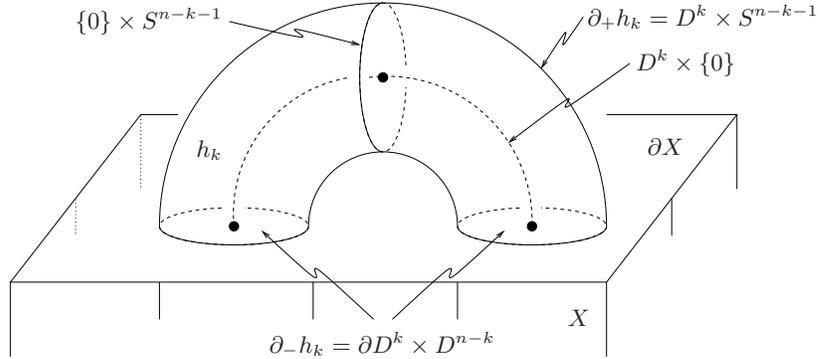}
  \caption{Attaching a $k$-handle.}
  \label{figure:k-handle}
\end{figure}

Thus, to attach a $k$-handle we take an \emph{embedding}
\[ \varphi\co \partial_- h_k=\partial D^k\times D^{n-k} \longrightarrow
\partial X,\]
i.e.\ a homeomorphism onto its image, and then form the quotient space
$(X\cup h_k)/\sim$,
where $\partial_-h_k \ni p\sim\varphi(p)\in\partial X$.
Notice that
\[ \partial \bigl((X\cup h_k)/\sim\bigr)=
\bigl(\partial X\setminus\varphi(\Int(\partial_-h_k))\bigl)
\cup_{S^{k-1}\times S^{n-k-1}} \partial_+h_k,\]
where the `corner' $S^{k-1}\times S^{n-k-1}=\partial_-h_k\cap\partial_+ h_k$
is identified with its image under~$\varphi$. In other words,
the effect of a handle attachment on the boundary
is to remove a copy of $\partial_- h_k$ from $\partial X$ and
to replace it by a copy of~$\partial_+ h_k$.

Later on we shall use the following observation: If the so-called
\emph{attaching sphere} $\varphi (S^{k-1}\times\{0\})$ bounds a disc
in $X$, that disc forms together with the core
disc of $h_k$ a $k$-sphere in $X\cup h_k$.

Here are some examples of handle attachments.

\begin{exs}
(1) In the decomposition of $S^2$ in Figure~\ref{figure:surfaces} into
a southern and northern hemisphere, we can regard the former as
a $0$-handle and the latter as a $2$-handle: start with a copy of
$D^2=\{0\}\times D^2=h_0$ and then glue another copy of
$D^2=D^2\times\{0\}=h_2$ along its boundary to the boundary of $h_0$.

\vspace{1mm}

(2) A handle decomposition of the $2$-torus $T^2$ into a single $0$-handle,
two $1$-handles and one $2$-handle is shown in
Figure~\ref{figure:torus-handles}.

\begin{figure}[h]
\labellist
\small\hair 2pt
\pinlabel $h_0$ at 294 138
\pinlabel $h_1$ at 288 207
\pinlabel $h_1$ at 409 340
\pinlabel $h_2$ at 471 103
\endlabellist
\centering
\includegraphics[scale=.4]{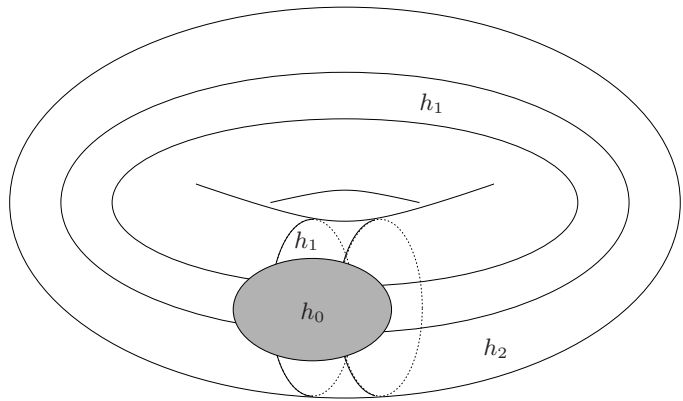}
  \caption{A handle decomposition of the $2$-torus.}
  \label{figure:torus-handles}
\end{figure}

\vspace{1mm}

(3) As shown before, the projective plane $\RP^2$ is obtained by gluing
a $2$-disc to a M\"obius band. The latter can be written as the
union of a $0$-handle and a $1$-handle, see
Figure~\ref{figure:moebius-handles}. Thus, $\RP^2$ has a handle
decomposition with a single $0$-, $1$- and $2$-handle each.
\end{exs}

\begin{figure}[h]
\labellist
\small\hair 2pt
\pinlabel $h_0$ at 72 71
\pinlabel $h_1$ [b] at 179 136
\endlabellist
\centering
\includegraphics[scale=.7]{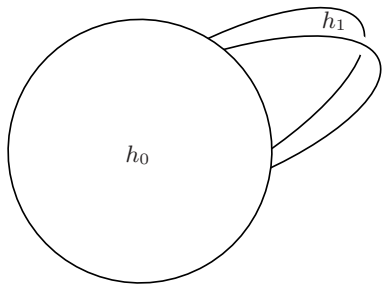}
  \caption{A handle decomposition of the M\"obius band.}
  \label{figure:moebius-handles}
\end{figure}

Notice that there are two ways to attach a $2$-dimensional $1$-handle.
In the decomposition of the $2$-torus, each $1$-handle forms an
annulus together with the $0$-handle, up to homeomorphism;
we call this an \emph{untwisted} $1$-handle. In the decomposition
of the projective plane, the $1$-handle forms a M\"obius band with
the $0$-handle; this is a \emph{twisted} $1$-handle.

You may well ask: what if we twist a $1$-handle twice or more?
I maintain that the homeomorphism type of the resulting handlebody
depends only on the number of twists being even or odd.
As an example, take an untwisted annulus $S^1\times [-1,1]$, which you can
visualise as a a circular cylinder in~$\R^3$. Slice it
open along a segment $\{*\}\times [-1,1]$ transverse to the
circular direction and then reglue it
with a double twist. The obvious bijection between the annulus
and the doubly twisted one is easily seen to be a homeomorphism.
What has changed is the embedding of the annulus in $3$-space, but not
its intrinsic topology.

Here are some relevant facts about handle decompositions:
\begin{enumerate}
\item[(H1)] Every compact \emph{smooth} manifold $M$ has a handle decomposition
with finitely many handles; this is a consequence of
Morse theory~\cite{mats02}. Here `smooth' means that the
coordinate change $\varphi_i\circ (\varphi_j)^{-1}$
between any to local charts
\[ M\supset U_i\stackrel{\varphi_i}{\longrightarrow} U_i'\subset\R^n\]
--- the coordinate change being defined on $\varphi_j(U_i\cap U_j)$ ---
should be~$C^{\infty}$. In fact, for $n\neq 4$ a handle decomposition
exists even without this smoothness assumption, cf.~\cite[I.1]{kirb89}.
In dimension four, the smoothness assumption is essential.

\item[(H2)] If the $n$-dimensional closed
manifold $M$ is connected, one can always find a
handle decomposition with precisely one handle each of index $0$ and~$n$
(and handles of intermediate index, unless the manifold is a sphere,
cf.~(H3)). Indeed,
the only handle with a disconnected lower boundary
$\partial D^k\times D^{n-k}$ is that of index $k=1$. Thus, if there
are several $0$-handles, the only way to arrive at a connected manifold
is to connect them via $1$-handles. But
two $0$-handles connected by a $1$-handle are homeomorphic to
a single $0$-handle, see Figure~\ref{figure:0-1-handle}.

For $n$-handles one argues analogously by turning the handle
decomposition `upside down'. Observe that we can read a handle
decomposition in reverse order, where every handle of index $k$ becomes a
handle of index $n-k$, and the roles of lower and upper boundary
are reversed.

\begin{figure}[h]
\labellist
\small\hair 2pt
\endlabellist
\centering
\includegraphics[scale=.4]{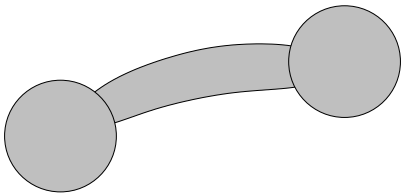}
  \caption{Two $0$-handles connected by a $1$-handle.}
  \label{figure:0-1-handle}
\end{figure}

\item[(H3)] The attaching of the $n$-handle is unique up to homeomorphism
(even up to diffeomorphism for $n\leq 6$). This is a consequence of the
Alexander trick: any homeomorphism $\phi$ of $S^{n-1}=\partial D^n$ extends
to a homeomorphism $\overline{\phi}$ of $D^n$ by setting
\[ \overline{\phi}(\bfx)=\begin{cases}
\|\bfx\|\cdot\phi(\bfx/\|\bfx\|) & \text{for $\bfx\neq\mathbf{0}$},\\
\mathbf{0}               & \text{for $\bfx=\mathbf{0}$}.
\end{cases}\]
Thus, given a handlebody $X$ with $\partial X\cong S^{n-1}$
and two gluings $X\cup_{\varphi}D^n$ and $X\cup_{\psi}D^n$,
where $\varphi,\psi$ are homeomorphisms from $S^{n-1}$ to $\partial X$,
we can define a homeomorphism
\[ X\cup_{\varphi}D^n\longrightarrow X\cup_{\psi}D^n\]
by setting it equal to the identity on $X$, and equal to the extension
of $\psi^{-1}\circ\varphi$ on~$D^n$. For $n\leq 6$ this can be
done smoothly by deep results in differential topology~\cite{kemi63},
cf.~\cite{kosi93}.
\end{enumerate}

Point (H2) shows that handle decompositions of a given manifold are far from
unique. Here is another cancellation phenomenon,
which for simplicity I explain for surfaces: start with a $2$-disc
as a $0$-handle and attach an untwisted $1$-handle to produce an annulus;
then fill in the annulus with another $2$-disc (acting as a $2$-handle)
to get back to a $2$-disc. More generally, a $k$-handle can be
`cancelled' by a $(k+1)$-handle, provided it is
possible to attach the latter appropriately.

From now on we shall always assume without loss of generality that the
manifold in question is connected, and (in the closed case) that
we have a handle decomposition with a single $0$- and $n$-handle each.
\section{Surfaces}
\label{section:surfaces}
We now apply these facts about handle decompositions
to produce $1$-dimensional representations of surfaces.
\subsection{Ignore the $2$-handle}
Point (H2) allows us to assume that we have a single $0$-handle
and a single $2$-handle;
point (H3) tells us that we need not care about the attaching
of the $2$-handle, so we may as well forgo drawing it.
Thus, the handle decompositions in
Figure~\ref{figure:surfaces-handles} may be interpreted as
pictures of $S^2$, $T^2$ and $\RP^2$, respectively.

\begin{figure}[h]
\labellist
\small\hair 2pt
\endlabellist
\centering
\includegraphics[scale=.4]{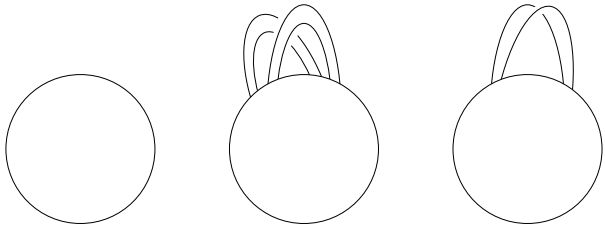}
  \caption{Handle decompositions of $S^2$, $T^2$ and $\RP^2$, not showing
the $2$-handle.}
  \label{figure:surfaces-handles}
\end{figure}

To build connected sums of these manifolds, simply repeat the
corresponding $1$-handles along the boundary of the $0$-handle.
For instance, Figure~\ref{figure:connected} shows the connected sum
$T^2\#\RP^2$. I have drawn it together with the $2$-handle so as to
indicate the circle $S^1=D^1_-\cup D^1_+$
along which the connected sum splits into
$T^2\setminus\Int(D^2)$ and $\RP^2\setminus\Int(D^2)$.

\begin{figure}[h]
\labellist
\small\hair 2pt
\pinlabel $\red{D^1_-}$ [b] at 70 141
\pinlabel $\red{D^1_+}$ [b] at 304 145
\pinlabel $h_2$ [bl] at 349 203
\pinlabel $\cup_{S^1}$ at 187 145
\endlabellist
\centering
\includegraphics[scale=.4]{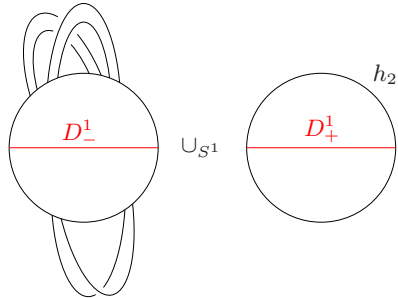}
  \caption{Handle decomposition of the connected sum $T^2\#\RP^2$.}
  \label{figure:connected}
\end{figure}
\subsection{Only draw the attaching of the $1$-handles}
As remarked upon in Section~\ref{subsection:handle}, when we have
attached a $1$-handle, in order to determine the homeomorphism
type of the resulting space we need only remember where along the
boundary of the $0$-handle we have attached the $1$-handle, and whether
it is a twisted or untwisted one. This allows us to encode a
$1$-handle by a pair of arrows in $\partial h_0=S^1$.
Think of the arrow as coming from an orientation on the
second factor in a $1$-handle $D^1\times D^1$; an untwisted
handle then corresponds to the arrows pointing in \emph{opposite}
directions along $\partial h_0=S^1$; a twisted arrow, in the
\emph{same} direction. As a further
simplification, thanks to stereographic
projection we can think of the circle $S^1$ as the union of
$\R$ with a point $\infty$ `at infinity', so we may draw the
arrows for the handle attachments on the real line; see
Figure~\ref{figure:arrows}, which illustrates this for $T^2\#\RP^2$.

\begin{figure}[h]
\labellist
\small\hair 2pt
\pinlabel $1$ [b] at 238 150
\pinlabel $2$ [b] at 275 150
\pinlabel $1'$ [b] at 326 150
\pinlabel $2'$ [b] at 366 150
\pinlabel $3$ [b] at 454 150
\pinlabel $3'$ [b] at 490 150
\endlabellist
\centering
\includegraphics[scale=.4]{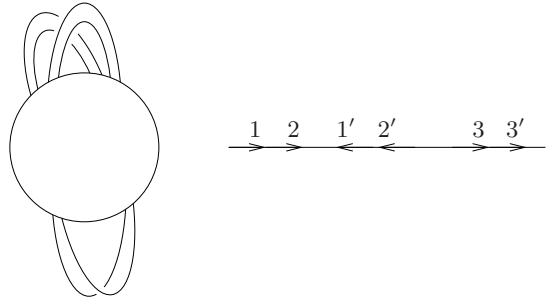}
  \caption{Handle attachments indicated by arrows.}
  \label{figure:arrows}
\end{figure}
\subsection{Handle slides}
Sliding a $1$-handle along the boundary of the handlebody to which it
is attached does not change the homeomorphism type of the
resulting space. This can be seen by undoing the slide
in a collar neighbourhood of the boundary of the handlebody,
see Figure~\ref{figure:collar}.

\begin{figure}[h]
\labellist
\small\hair 2pt
\endlabellist
\centering
\includegraphics[scale=.4]{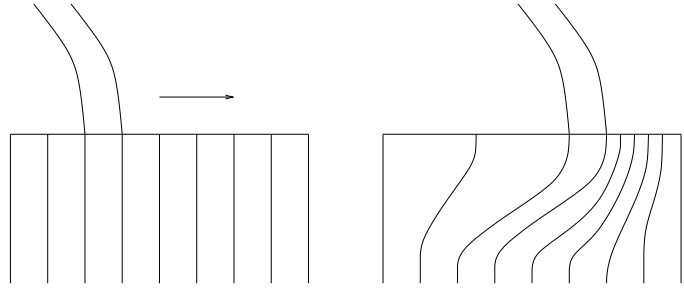}
  \caption{Sliding a $1$-handle along a boundary.}
  \label{figure:collar}
\end{figure}

In particular, we can slide an attaching region of a $1$-handle
across the point $\infty\in S^1=\partial h_0$. In the $1$-dimensional
diagram this means that an arrow can disappear on the right, say,
and reappear, pointing in the same direction, on the left.

When we slide one of the two attaching regions of a $1$-handle
$h_1^1$ along the upper boundary of a previously attached $1$-handle
$h_1^0$, the handle $h_1^1$ receives a twist
if $h_1^0$ was twisted. Notice that such a slide starts at one
attaching region of $h_1^0$ and ends at the other one, at the same
side of the partner arrow indicating the attachment of~$h_1^0$,
see Figures \ref{figure:handleslide-untwisted}
and~\ref{figure:handleslide-twisted}. In either figure, one attaching
region of the handle labelled $1$ slides across the handle whose
attaching regions are labelled $0$ and $0'$; the slide starts
at the back of the $0$-arrow and ends at the back of the $0'$-arrow.

\begin{figure}[h]
\labellist
\small\hair 2pt
\pinlabel $0$ [b] at 157 30
\pinlabel $0'$ [b] at 214 30
\pinlabel $1$ [b] at 85 30
\pinlabel $0$ [b] at 516 30
\pinlabel $0'$ [b] at 568 30
\pinlabel $1$ [b] at 643 30
\endlabellist
\centering
\includegraphics[scale=.38]{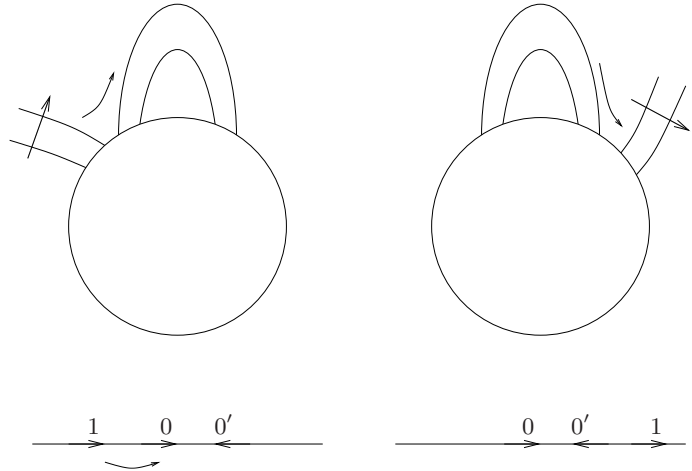}
  \caption{Sliding a $1$-handle across an untwisted $1$-handle.}
  \label{figure:handleslide-untwisted}
\end{figure}

\begin{figure}[h]
\labellist
\small\hair 2pt
\pinlabel $0$ [b] at 157 30
\pinlabel $0'$ [b] at 193 30
\pinlabel $1$ [b] at 85 30
\pinlabel $0$ [b] at 516 30
\pinlabel $1$ [b] at 572 30
\pinlabel $0'$ [b] at 608 30
\endlabellist
\centering
\includegraphics[scale=.38]{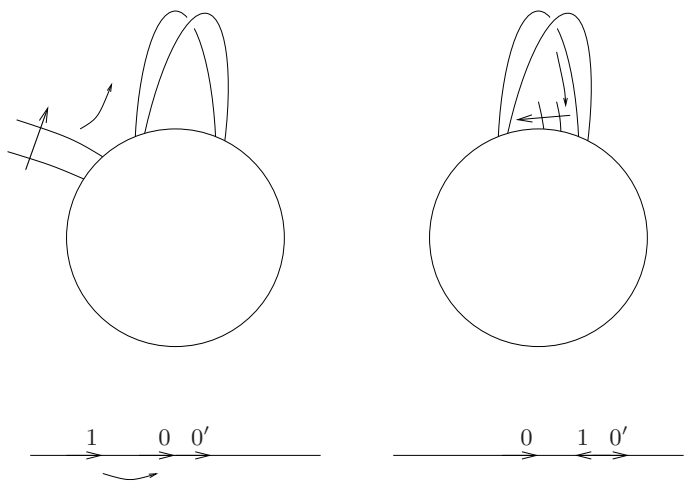}
  \caption{Sliding a $1$-handle across a twisted $1$-handle.}
  \label{figure:handleslide-twisted}
\end{figure}

Notice that by the same sliding argument we may assume (as we did
implicitly in the preceding section) that
each $1$-handle is attached to the boundary of the $0$-handle ---
disjoint from the attaching regions of the other $1$-handles ---
and not to parts of the upper boundary of other $1$-handles.

The precept that every Maths talk should contain a proof applies
equally to colloquia. Here is a fundamental statement on the
classification of surfaces that we can now prove in diagrammatic language.

\begin{prop}
\label{prop:surfaces}
The surface $T^2\#\RP^2$ is homeomorphic to $\RP^2\#\RP^2\#\RP^2$.
\end{prop}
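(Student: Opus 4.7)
The plan is to represent $T^2\#\RP^2$ by the arrow diagram of Figure~\ref{figure:arrows}---two interleaved pairs $(1,1')$, $(2,2')$ of oppositely directed arrows (the $T^2$ summand) together with a pair $(3,3')$ of equidirected arrows (the $\RP^2$ summand)---and to transform it by a sequence of handle slides into the diagram of $\RP^2\#\RP^2\#\RP^2$, which consists of three disjoint, non-interleaved pairs of equidirected arrows. Since handle slides do not affect the homeomorphism type of the resulting closed surface, this will prove the proposition.

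My first slide takes one end of the untwisted handle $(1,1')$ over the twisted handle $(3,3')$, using the local move of Figure~\ref{figure:handleslide-twisted}. By that figure, the sliding handle picks up an extra twist, so $(1,1')$ is converted from untwisted to twisted. Performing the analogous slide with one end of $(2,2')$ over (say) the now-twisted handle $(1,1')$ converts $(2,2')$ into a twisted handle as well. After these two slides every $1$-handle in the diagram is twisted. Further slides of the same type, combined with slides across the point $\infty\in S^1=\partial h_0$ (which simply move arrows from one end of the real line to the other), allow us to reorder the six arrows into three disjoint non-interleaved equidirected pairs, yielding the arrow diagram of $\RP^2\#\RP^2\#\RP^2$.

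I expect the main obstacle to be purely combinatorial bookkeeping: one has to track how each slide rearranges the ordering and the directions of the arrows, and check that after finitely many slides all interleavings of pairs can be removed. No new ingredient is required beyond the local moves in Figures~\ref{figure:handleslide-untwisted} and~\ref{figure:handleslide-twisted}, together with the observation at the end of Section~\ref{subsection:handle} that only the parity of the number of twists on a $1$-handle matters for the homeomorphism type.
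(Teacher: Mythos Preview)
Your approach is correct and is essentially the same as the paper's: both arguments transform the arrow diagram of $T^2\#\RP^2$ into that of $\RP^2\#\RP^2\#\RP^2$ by a sequence of handle slides, invoking exactly the local moves of Figures~\ref{figure:handleslide-untwisted} and~\ref{figure:handleslide-twisted} together with slides across~$\infty$. The paper carries this out as an explicit three-move sequence in Figure~\ref{figure:proof-diagram}, whereas you describe the strategy (twist the two untwisted handles by sliding them over a twisted one, then disentangle) and defer the bookkeeping.

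Two small points worth tightening when you fill in the details. First, in the initial configuration $1\,2\,1'\,2'\,3\,3'$ neither end of handle~$1$ is adjacent to handle~$3$ along the visible part of the line; you must use the adjacency of $1$ and $3'$ through~$\infty$ (or perform a preliminary slide over handle~$2$) before your first move over handle~$3$. Second, in the ``reorder'' phase you should be careful that every slide over a twisted handle toggles the twist of the moving handle, so a naive disentangling move can undo a twist you have already introduced; the explicit sequence in the paper avoids this by choosing the moves so that each handle crosses a twisted handle an odd number of times in total. Once you track this parity the argument goes through as you outlined.
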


\begin{proof}
With the handle slides we discussed, the proof reduces to the
three slides shown in the $1$-dimensional diagrams
of Figure~\ref{figure:proof-diagram}. The diagram at the top
represents $T^2\#\RP^2$; that at the bottom, $\RP^2\#\RP^2\#\RP^2$.
\end{proof}

\begin{figure}[h]
\labellist
\small\hair 2pt
\pinlabel $1$ [b] at 68 443
\pinlabel $2$ [b] at 134 443
\pinlabel $1'$ [b] at 230 443
\pinlabel $2'$ [b] at 301 443
\pinlabel $3$ [b] at 350 443
\pinlabel $3'$ [b] at 422 443
\pinlabel $1$ [b] at 68 299
\pinlabel $2$ [b] at 134 299
\pinlabel $3$ [b] at 206 299
\pinlabel $2'$ [b] at 282 299
\pinlabel $1'$ [b] at 350 299
\pinlabel $3'$ [b] at 422 299
\pinlabel $1$ [b] at 68 155
\pinlabel $3'$ [b] at 157 155
\pinlabel $3$ [b] at 230 155
\pinlabel $2$ [b] at 279 155
\pinlabel $2'$ [b] at 350 155
\pinlabel $1'$ [b] at 422 155
\pinlabel $3'$ [b] at 84 11
\pinlabel $3$ [b] at 155 11
\pinlabel $2$ [b] at 206 11
\pinlabel $2'$ [b] at 278 11
\pinlabel $1'$ [b] at 350 11
\pinlabel $1$ [b] at 422 11
\endlabellist
\centering
\includegraphics[scale=.4]{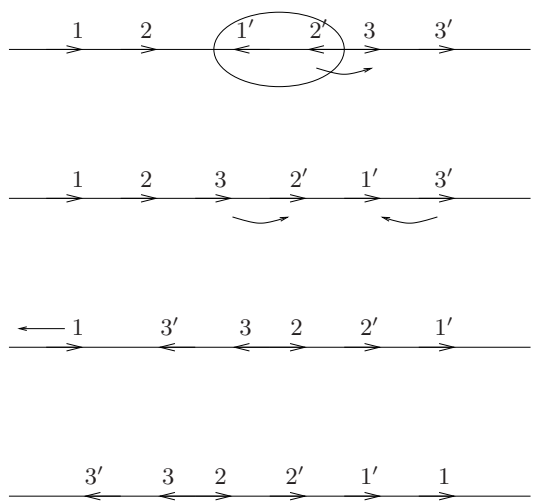}
  \caption{Proof of Proposition~\ref{prop:surfaces}.}
  \label{figure:proof-diagram}
\end{figure}

With very similar arguments one can then show that in fact every
compact surface is homeomorphic to exactly one of $\#_g T^2$, $g\in\N_0$,
or $\#_h\RP^2$, $h\in\N$. I have used this approach in
an undergraduate lecture course on surfaces~\cite{geig13}, and in my view
it is superior to the traditional approach to the classification of surfaces
via triangulations and cut-and-paste arguments, since it extends more
naturally to higher dimensions, and the steps one has to take
to simplify a given word for the attaching of $1$-handles into
one of the standard words for the model surfaces are quite straightforward.

\section{Dimension three: Heegaard diagrams}
As we have seen, there are two ways to attach a $2$-dimensional
$1$-handle to the boundary of a $2$-disc. The same is true in
higher dimensions. Indeed, the lower boundary $\partial_- h_1=
\partial D^1\times D^{n-1}$ of an $n$-dimensional $1$-handle consists of two
$(n-1)$-discs with opposite orientations. An untwisted $1$-handle
corresponds to an orientation-preserving embedding of $\partial_-h_1$
into the boundary of the $0$-handle.

From now on I shall assume that all manifolds are orientable. This
is equivalent to saying that all $1$-handles are untwisted. The
$3$-dimensional situation is shown in Figure~\ref{figure:3-dim-1-handle}.
With this assumption understood, each $1$-handle $h_1$ is then
simply represented by a pair of
\emph{attaching discs} in the boundary $\partial h_0$ of the $0$-handle,
i.e.\ the image of the lower boundary $\partial_-h_1$ under the
gluing map.

\begin{figure}[h]
\labellist
\small\hair 2pt
\endlabellist
\centering
\includegraphics[scale=.5]{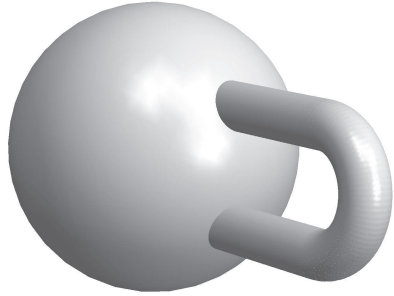}
  \caption{A $3$-dimensional $0$-handle with an untwisted $1$-handle
attached.}
  \label{figure:3-dim-1-handle}
\end{figure}

Notice that when we attach $g$ untwisted $3$-dimensional $1$-handles to the
boundary of a $3$-ball, the boundary of the resulting handlebody
is the oriented surface $\Sigma_g$ of genus~$g$.
Figure~\ref{figure:3-dim-handlebody} shows an alternative view of such
a handlebody (for $g=2$).

\begin{figure}[h]
\labellist
\small\hair 2pt
\pinlabel $h_0$ at 353 168
\pinlabel $h_1$ [l] at 147 102
\pinlabel $h_1$ [l] at 489 102
\endlabellist
\centering
\includegraphics[scale=.4]{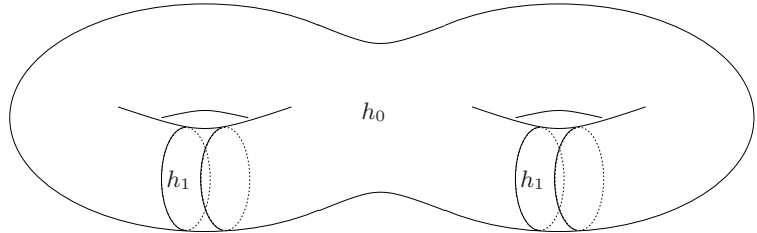}
  \caption{A $3$-dimensional $0$-handle with two untwisted $1$-handles
attached --- an alternative view.}
  \label{figure:3-dim-handlebody}
\end{figure}

Now let $M$ be a closed $3$-manifold. Again we appeal to (H2) and
consider a handle decomposition of $M$ having precisely one
$0$-handle and one $3$-handle. Write $M_1$ for the handlebody made up
of the $0$-handle and all (say~$g$) $1$-handles. This is a manifold
with boundary~$\Sigma_g$. The complement $M\setminus \Int (M_1)$
consists of the $2$-handles and the $3$-handle. As observed in (H2),
the $2$-handles are glued to the $3$-handle along their upper
boundary, so we may regard this complement as a $0$-handle with
$1$-handles attached.

Since the boundary of $M_1$ equals that of the complement $M\setminus
\Int (M_1)$, the number of $2$-handles must likewise be~$g$. In other words,
$M$ is obtained by gluing two copies of a $1$-handlebody of genus $g$
by a homeomorphism of their boundary~$\Sigma_g$, see
Figure~\ref{figure:heegaard} (for $g=2$). Such a decomposition
of a $3$-manifold is called a \emph{Heegaard splitting}.

\begin{figure}[h]
\labellist
\small\hair 2pt
\pinlabel $M_1$ [br] at 28 111
\pinlabel $M\setminus\Int(M_1)$ [br] at 461 111
\pinlabel $h_0$ at 164 75
\pinlabel $h_3$ at 598 75
\pinlabel $h_1$ [t] at 83 24
\pinlabel $h_1$ [t] at 238 24
\pinlabel $h_2$ [t] at 513 24
\pinlabel $h_2$ [t] at 670 24
\pinlabel $\cup_{\Sigma_2}$ at 382 72
\endlabellist
\centering
\includegraphics[scale=.45]{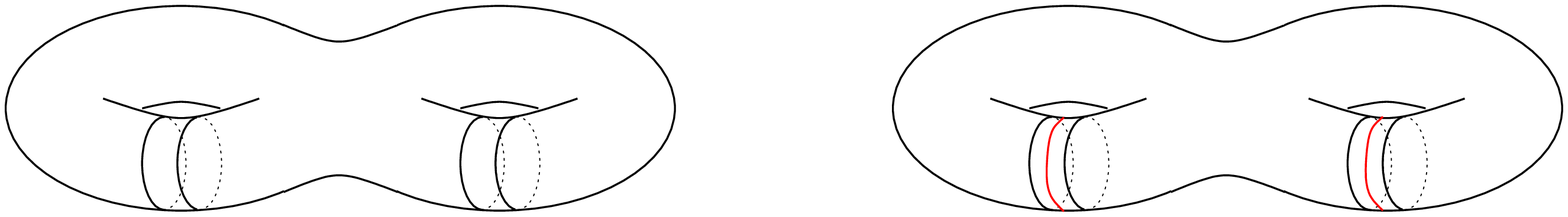}
  \caption{A Heegaard splitting of genus~$2$.}
  \label{figure:heegaard}
\end{figure}

A $2$-handle $h_2=D^2\times D^1$ is attached to $M_1$ by an
embedding $\varphi$ of the lower boundary $\partial_-h_2=
\partial D^2\times D^1$ into~$\partial M_1$. Up to inessential choices,
it suffices to draw the image $\varphi(\partial D^2\times\{0\})
\subset\partial M_1$ of the central circle in~$\partial_-h_2$, i.e.\
the \emph{attaching circle} of the $2$-handle.
Parts of this attaching circle will lie in the upper boundary
$D^1\times\partial D^2$ of the $1$-handles, but we may assume that 
the curve traverses these upper boundaries along a straight line segment
$D^1\times\{*\}$ (or possibly several disjoint line segments
of this form). It then suffices to draw the part of the
attaching circle that lies in the boundary of the $0$-handle~$h_0$, outside
the attaching discs of the $1$-handles.

The final simplification comes again from regarding $\partial h_0=S^2$
as the union of $\R^2$ with a point $\infty$ `at infinity';
this allows us to draw all the information about the attaching
of $1$- and $2$-handles in~$\R^2$. Such a planar diagram for
a $3$-manifold is called a \emph{Heegaard diagram}.
Here are some simple examples.

\begin{exs}
(1) The $3$-sphere $S^3$ can be obtained
by gluing two $3$-balls along their boundary, that is, a
$3$-dimensional $0$-handle and a $3$-handle. Since there are no
$1$- or $2$-handles, this handle decomposition corresponds to
the empty diagram in~$\R^2$.

\vspace{1mm}

(1') The $3$-sphere can also be given a Heegaard splitting of
any other genus. To describe the genus~$1$ splitting,
think of $S^3$ as the unit sphere in~$\C^2$. The subset
\[ V_1=\{ (z_1,z_2)\in S^3\co |z_2|\leq 1/\sqrt{2}\}\]
is homeomorphic to a solid torus $S^1\times D^2$ via
$\phi_1\co (z_1,z_2)\mapsto (z_1/|z_1|,\sqrt{2}\,z_2)$; similarly, the
complement $V_2=S^3\setminus\Int(V_1)$ is homeomorphic to
$D^2\times S^1$ via $\phi_2\co (z_1,z_2)\mapsto (\sqrt{2}\, z_1,z_2/|z_2|)$.

We regard $V_1$ as the union of a $0$- and a $1$-handle; $V_2$
is the union of a $2$-handle and a $3$-handle. The gluing
of $V_1\cong S^1\times D^2$ and $V_2\cong D^2\times S^1$ that produces
the $3$-sphere is given by the obvious identification
\[ \partial V_1=S^1\times\partial D^2=S^1\times S^1=\partial D^2\times S^1
=\partial V_2,\]
since $\phi_1=\phi_2$ on $\partial V_1\cap\partial V_2$.

On the boundary $\partial V$ of a solid torus $V=S^1\times D^2$, a curve of
the form $\{*\}\times\partial D^2$ is called a \emph{meridian}.
It is characterised (up to smooth deformations) by the
fact that it is not contractible inside~$\partial V$, but it
bounds a disc in~$V$. A curve on $\partial V$ that, when viewed
as a curve in $V$, goes once along the $S^1$-factor, e.g.\
the curve $S^1\times\{*\}$, is called a \emph{longitude}
of the solid torus.
In this language, the gluing of $V_1$ and $V_2$ that
produces $S^3$ identifies
a meridian $\mu_2$ of $V_2$ with a longitude $\lambda_1$ of~$V_1$.

These curves are shown in Figure~\ref{figure:s3-heegaard1}.
We may think of $\mu_2$ as $\partial D^2\times\{0\}\subset \partial_-h_2$;
its image $\lambda_1$ on $\partial V_1=\partial(h_0\cup h_1)$ is then the
attaching circle of the $2$-handle.

\begin{figure}[h]
\labellist
\small\hair 2pt
\pinlabel $V_1$ [r] at 0 455
\pinlabel $V_2$ [r] at 0 154
\pinlabel $h_0$ at 124 532
\pinlabel $h_1$ [tl] at 167 423
\pinlabel ${h_2\cup h_3}$ at 200 210
\pinlabel $\mu_2$ [l] at 160 92
\pinlabel $\lambda_1$ [l] at 345 447
\endlabellist
\centering
\includegraphics[scale=.4]{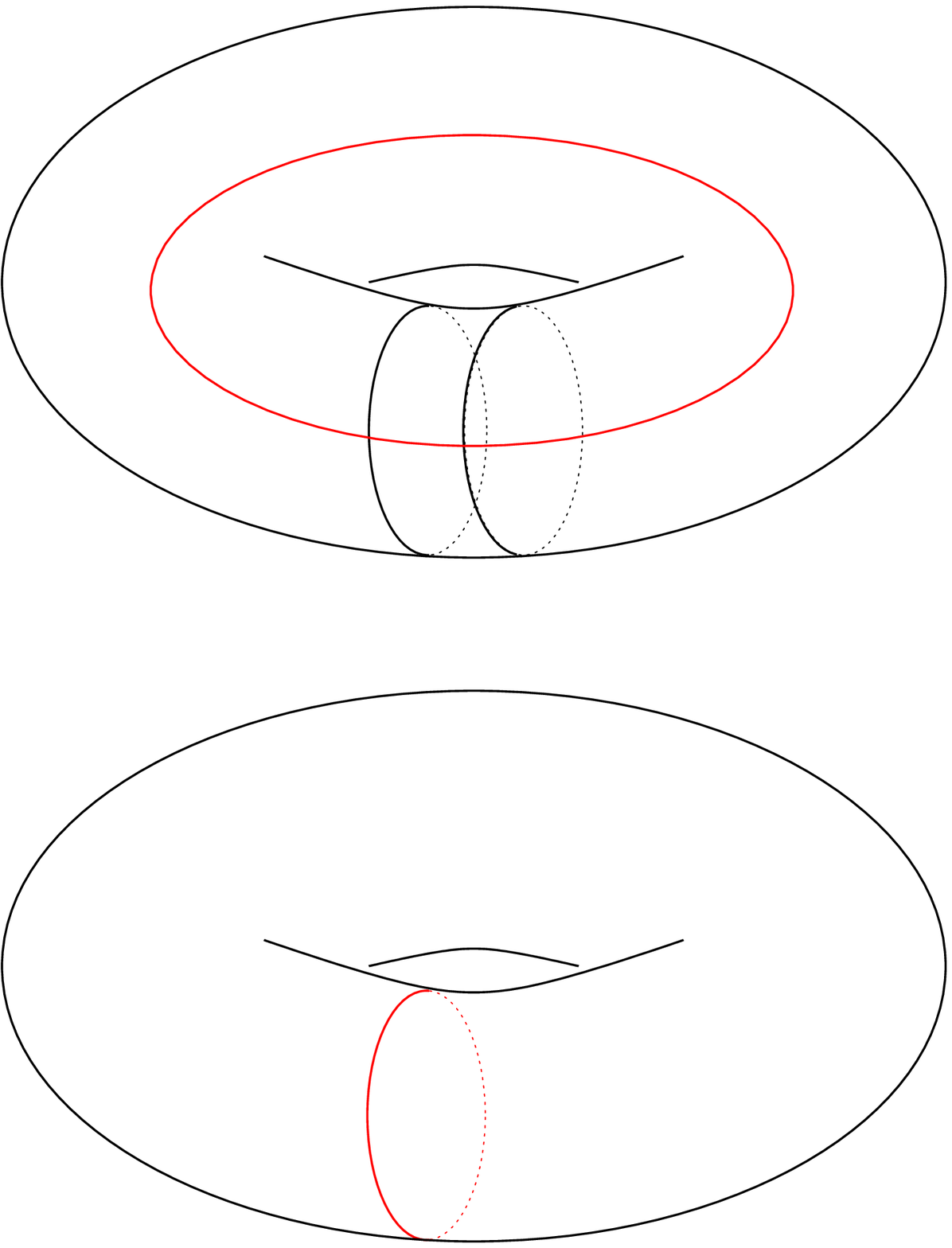}
  \caption{A Heegaard splitting of $S^3$ of genus~$1$.}
  \label{figure:s3-heegaard1}
\end{figure}

Alternatively, consider Figure~\ref{figure:s3-heegaard1a}.
Here we think of $S^3$ as $\R^3\cup\{\infty\}$, and $\R^3$
is regarded as the result of rotating the drawing plane
about a vertical axis in the plane. The two discs represent
the solid torus~$V_2$.
Each line connecting the discs represents a disc; these discs
make up the solid torus~$V_1$. Here the gluing of $\mu_2$ with
$\lambda_1$ is perfectly transparent.

\begin{figure}[h]
\labellist
\small\hair 2pt
\pinlabel $V_1$ at 593 372
\pinlabel $V_2$ at 560 217
\pinlabel $\mu_2=\lambda_1$ [bl] at 627 243
\pinlabel $\text{disc}$ [bl] at 10 220
\pinlabel ${\text{through $\infty$}}$ [tl] at 10 217
\pinlabel ${\text{circle through $\infty$}}$ [l] at 379 21
\endlabellist
\centering
\includegraphics[scale=.4]{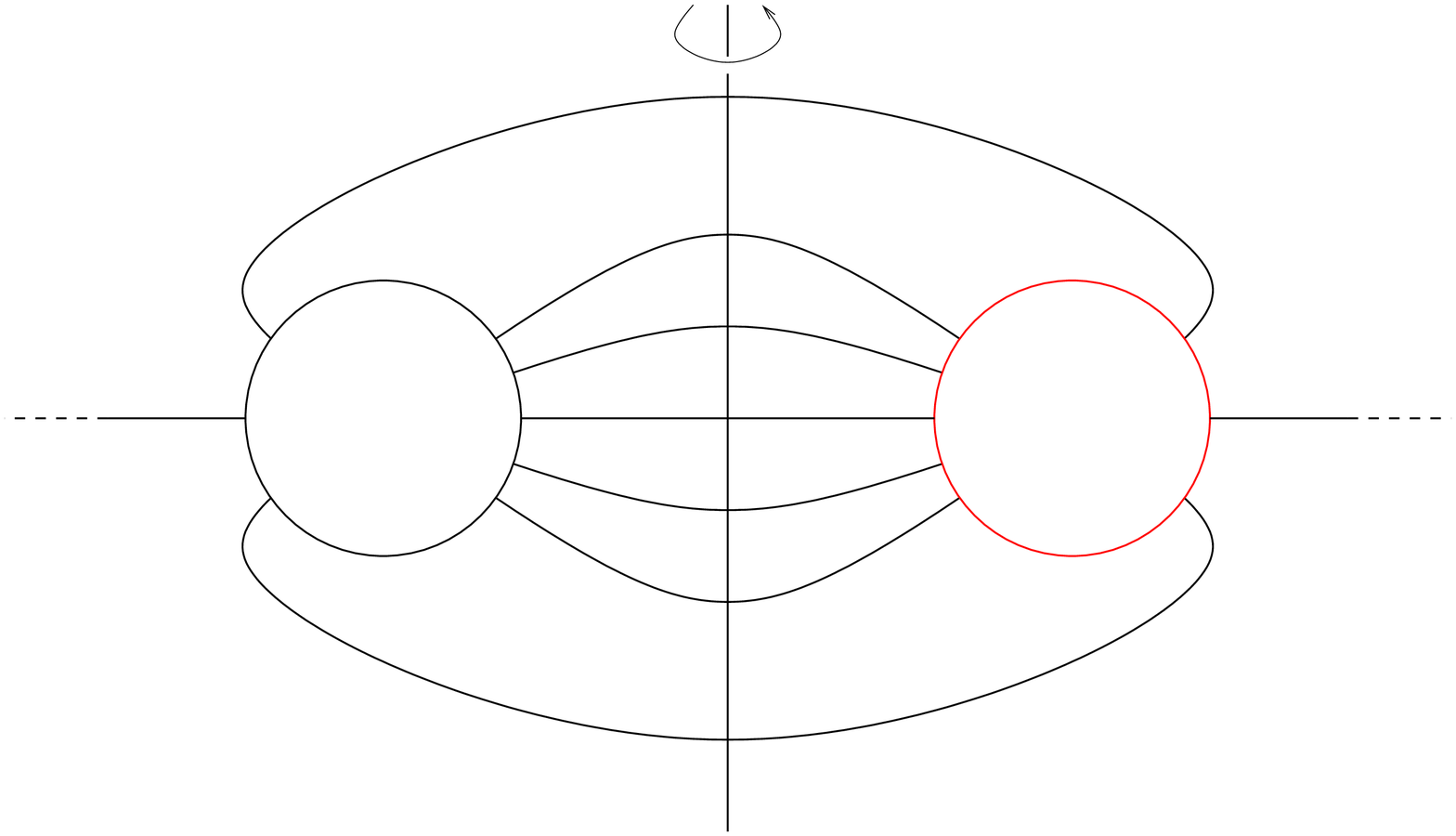}
  \caption{A Heegaard splitting of $S^3$ of genus~$1$ ---
an alternative view.}
  \label{figure:s3-heegaard1a}
\end{figure}

The translation of this information into a Heegaard diagram for $S^3$
is shown in Figure~\ref{figure:s3-heegaard1b}.
You see the attaching discs of the $1$-handle
in $\partial h_0=S^2=\R^2\cup\{\infty\}$; the line connecting them
is the part of the attaching circle that lies outside $\partial_+h_1$.

\begin{figure}[h]
\labellist
\small\hair 2pt
\endlabellist
\centering
\includegraphics[scale=.4]{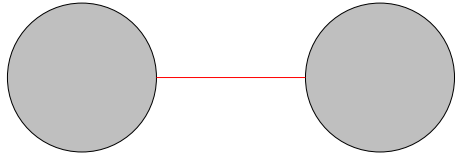}
  \caption{The Heegaard diagram of $S^3$ corresponding to the
splitting in Figures~\ref{figure:s3-heegaard1} and~\ref{figure:s3-heegaard1a}.}
  \label{figure:s3-heegaard1b}
\end{figure}

\vspace{1mm}

(2) Another simple example of a $3$-manifold is the product $S^1\times S^2$.
This, too, has a Heegaard splitting of genus~$1$. Take two copies
of a solid torus $S^1\times D^2$ and identify them along the
boundary with the identity map of $S^1\times\partial D^2$,
see Figure~\ref{figure:s1s2-split}.

\begin{figure}[h]
\labellist
\small\hair 2pt
\pinlabel $h_0$ at 54 418
\pinlabel $h_1$ [tl] at 165 371
\pinlabel ${h_2\cup h_3}$ at 200 55
\pinlabel $\mu_2$ [l] at 217 181
\pinlabel ${\text{image of $\mu_2$}}$ [l] at 217 462
\endlabellist
\centering
\includegraphics[scale=.4]{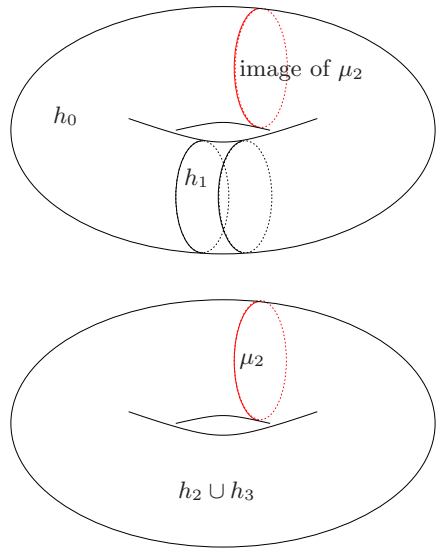}
  \caption{A Heegaard splitting of $S^1\times S^2$.}
  \label{figure:s1s2-split}
\end{figure}

The identification of the two solid tori with $h_0\cup h_1$ and
$h_2\cup h_3$, respectively, can be done in such a way
that the attaching circle of the $2$-handle, i.e.\
the image of $\mu_2=\partial D^2\times\{0\}\subset\partial_-h_2$
in $\partial (h_0\cup h_1)$, lies outside $\partial_+h_1$.
Then the corresponding Heegaard diagram is given by
Figure~\ref{figure:s1s2-diagram}.
\end{exs}

\begin{figure}[h]
\labellist
\small\hair 2pt
\endlabellist
\centering
\includegraphics[scale=.4]{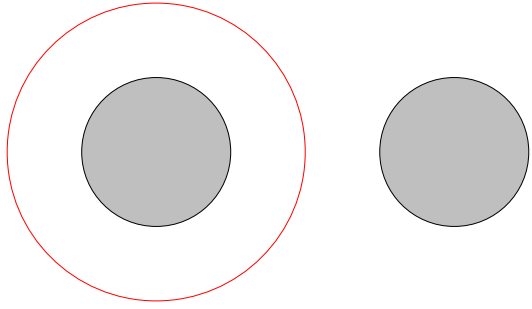}
  \caption{The Heegaard diagram of $S^1\times S^2$ corresponding to the
splitting in Figure~\ref{figure:s1s2-split}.}
  \label{figure:s1s2-diagram}
\end{figure}

For more information on Heegaard splittings and diagrams
see~\cite{prso97}.
\section{Dimension four: Kirby diagrams}
Going up one dimension, we are hopeful to represent
$4$-dimensional manifolds by $3$-dimensional diagrams. However, there
seems to be one serious impediment.
The lower boundary of a $4$-dimensional $3$-handle $D^3\times D^1$
is $\partial D^3\times D^1$, in other words, a thickened $2$-sphere.
Thus, in order to describe the attaching of $3$-handles, it seems that we
face the well-nigh impossible task to understand --- and draw --- embeddings
of $2$-spheres into the boundary of a $4$-dimensional handlebody made up of
handles of index at most~$3$.

One of the great serendipities of $4$-manifold topology, as we shall see,
is that the $3$-handles do not, in fact, carry any topological information.
This allows us to represent any closed $4$-dimensional manifold
solely by the handles of index at most~$2$.
\subsection{Kirby diagrams of $2$-handlebodies}
Before I explain this phenomenon, I shall present a few examples
of $4$-dimensio\-nal $2$-handlebodies, that is, manifolds with
boundary made up of handles of index at most~$2$.

\begin{exs}
(1) A $4$-dimensional $0$-handle $h_0$ is simply a copy of the $4$-disc~$D^4$;
its boundary $\partial h_0$ is a $3$-sphere~$S^3$. In order to depict
the attaching of handles of higher index, we think of $S^3$ as
$\R^3\cup\{\infty\}$, and draw the attaching regions of the
handles in~$\R^3$. These $3$-dimensional pictures are called
\emph{Kirby diagrams}. Thus, the empty diagram represents
the $4$-disc.

\vspace{1mm}

(2) A $1$-handle $h_1$ is a copy of $D^1\times D^3$, attached to $\partial h_0$
along $\partial_-h_1=\partial D^1\times D^3$, i.e.\ two solid balls, see
Figure~\ref{figure:s1d3}. Again we assume that the two balls represent
an orientation-preserving embedding of $\partial_-h_1$, so that
the $1$-handle is untwisted.

\begin{figure}[h]
\labellist
\small\hair 2pt
\endlabellist
\centering
\includegraphics[scale=.4]{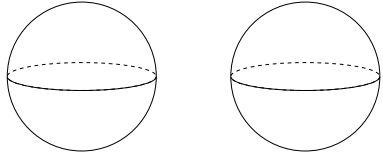}
  \caption{The Kirby diagram showing a single $1$-handle.}
  \label{figure:s1d3}
\end{figure}

The schematic picture in Figure~\ref{figure:s1d3-schematic}
shows that the manifold obtained by attaching an untwisted $1$-handle
to $D^4$ is homeomorphic to~$S^1\times D^3$.

\begin{figure}[h]
\labellist
\small\hair 2pt
\pinlabel $D^1$ [t] at 277 314
\pinlabel $D^3$ [l] at 386 242
\pinlabel $D^4$ at 277 242
\pinlabel $D^1\times{D^3}$ at 277 50
\pinlabel $D^1$ [tl] at 469 65
\endlabellist
\centering
\includegraphics[scale=.4]{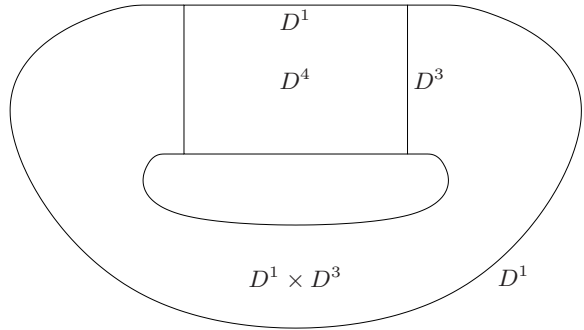}
  \caption{The Kirby diagram in Figure~\ref{figure:s1d3} represents
           $S^1\times D^3$.}
  \label{figure:s1d3-schematic}
\end{figure}

Alternatively and a little more formally, we can see this handle
decomposition by splitting $S^1$ into two intervals $D^1_{\pm}$,
glued along two points, i.e.\ a $0$-dimensional sphere~$S^0$:
\[ S^1\times D^3=(D^1_-\cup_{S^0}D^1_+)\times D^3=
(D^1_-\times D^3)\cup_{S^0\times D^3} (D^1_+\times D^3)=
h_0\cup h_3.\]

\vspace{1mm}

(3) In order to describe the attaching of a $2$-handle
$h_2=D^2\times D^2$ to a $4$-dimensional
handlebody~$X$, we need to visualise the image
of $\partial_-h_2=\partial D^2\times D^2$ in $\partial X$ under the
embedding used for the gluing. In the case of a
$3$-dimensional handle, it was sufficient to describe
the image of $\partial D^2\times\{0\}\subset\partial D^2\times D^1$,
since the embedding of $\partial D^2\times\{0\}$ extends to one
of $\partial D^2\times D^1$ in an essentially unique way.

In the $4$-dimensional situation, however, there are
homeomorphisms of $\partial D^2\times D^2$ that rotate the
$D^2$-factor as we go along the circle~$\partial D^2$, and that do not
extend to homeomorphisms of $D^2\times D^2$.
Regarding $D^2$ as a subset of~$\C$, for
each integer $m$ we have the homeomorphism
\[ \begin{array}{ccc}
\partial D^2\times D^2 & \longrightarrow & \partial D^2\times D^2\\
(\rme^{\rmi\theta}, z) & \longmapsto     & (\rme^{\rmi\theta},
                                            \rme^{\rmi m\theta}z),
\end{array}\]
and we can change a given embedding $\partial D^2\times D^2
\rightarrow\partial X$ by precomposing with this homeomorphism.
It is not hard to see, however, that this is
the only available freedom, up to inessential choices.
This implies that the attaching map is determined by the
image of $\partial D^2\times\{0\}$ and the parallel circle
$\partial D^2\times\{1\}$.

Figure~\ref{figure:plus1twist} shows a right-handed twist
of two curves in~$\R^3$. A box with an integer $m$ in a diagram
of knots (i.e.\ embedded circles) in $\R^3$ stands for
$m$ right-handed twists for $m>0$, and $|m|$ left-handed twists
for $m<0$.

\begin{figure}[h]
\labellist
\small\hair 2pt
\endlabellist
\centering
\includegraphics[scale=.4]{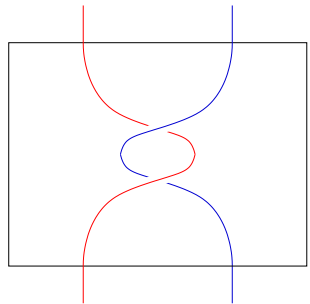}
  \caption{A right-handed twist.}
  \label{figure:plus1twist}
\end{figure}

For instance, the Kirby diagram consisting of a single unknotted circle
labelled with the integer $m$ is meant to represent the attaching
of a $2$-handle to $\partial D^4$ with the images of $\partial D^2\times\{0\}$
and $\partial D^2\times\{1\}$ in $\partial D^4$
as shown in Figure~\ref{figure:d2bundle1}.

\begin{figure}[h]
\labellist
\pinlabel $m$ [bl] at 125 140
\pinlabel $=$ at 208 91
\pinlabel $m$ at 441 91
\pinlabel ${\partial D^2\times\{1\}}$ [bl] at 424 156
\pinlabel ${\partial D^2\times\{0\}}$ [bl] at 300 41
\small\hair 2pt
\endlabellist
\centering
\includegraphics[scale=.4]{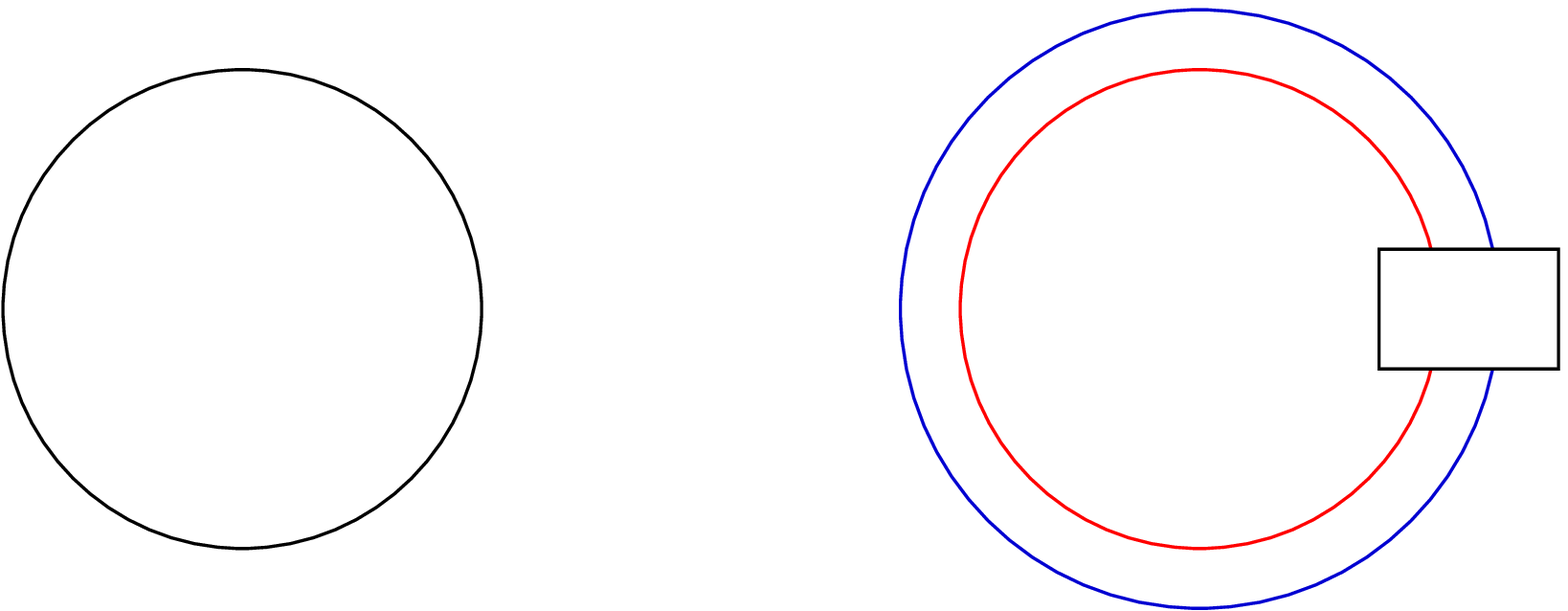}
  \caption{A single $2$-handle attached along an unknot.}
  \label{figure:d2bundle1}
\end{figure}

Now consider a schematic picture as in Figure~\ref{figure:s1d3-schematic},
but with $h_0=D^4$ thought of as $D^2\times D^2$, and a $2$-handle attached
along the unknot $\partial D^2\times\{0\}\subset D^2\times D^2=h_0$.
The horizontal disc $D^2\times\{0\}\subset D^4$ and the core
disc $D^2\times\{0\}$ in the $2$-handle $h_2=D^2\times D^2$ form
a $2$-sphere $S^2_0$. Transverse to $S^2_0$ we see copies of~$D^2$,
so we have a well-defined projection $h_0\cup h_2\rightarrow S^2$,
with the preimage of each point in $S^2$ equal to a $2$-disc.
This is what is called a $D^2$-bundle over~$S^2$.

The disc $D^2\times\{1\}$ in the $2$-handle can likewise be completed
to a $2$-sphere: simply observe that the image of its boundary
$\partial D^2\times\{1\}$ is another unknot in $\partial h_0$
that bounds a disc. Call this $2$-sphere $S^2_1$.

I claim that $S^2_0$ and $S^2_1$, after `wiggling',
intersect transversely in exactly $|m|$ points. To see this, we draw yet
another schematic picture. Figure~\ref{figure:s-intersection} shows
what we should do. The image $K_i$ of $\partial D^2\times\{i\}$,
$i=0,1$, bounds a disc $D^2_i$ in $\partial D^4$, which
together with the disc $D^2\times\{i\}$
in the $2$-handle forms the sphere $S^2_i$.
However, instead of taking $D^2_i$ in $\partial D^4$
to form the sphere, we can push it vertically into $D^4$ and
connect it with the disc in the $2$-handle via a vertical
cylinder over~$K_i$. When we push $D^2_0$ deeper into
$D^4$ than~$D^2_1$, the intersection points of $S^2_0$ and $S^2_1$
will be in one-to-one correspondence with the intersection points
of $K_0$ and the original $D^2_1\subset\partial D^4$.
With the obvious choice of $D^2_1$, an essentially flat disc
with boundary $K_1=\partial D^2\times\{1\}$ in Figure~\ref{figure:d2bundle1},
there are $|m|$ such intersection points. One can in fact endow
the $2$-spheres with orientations and count the intersection points
with sign; the correct count then will be~$m$, independent of
the choice of `wiggling'.
This number determines the $D^2$-bundle over $S^2$ and is called
its \emph{Euler number}.

\begin{figure}[h]
\labellist
\small\hair 2pt
\pinlabel $h_0=D^4$ at 142 146
\pinlabel $h_2$ at 434 73
\pinlabel $S^2_0$ [bl] at 385 220
\pinlabel $S^2_1$ [bl] at 466 232
\endlabellist
\centering
\includegraphics[scale=.4]{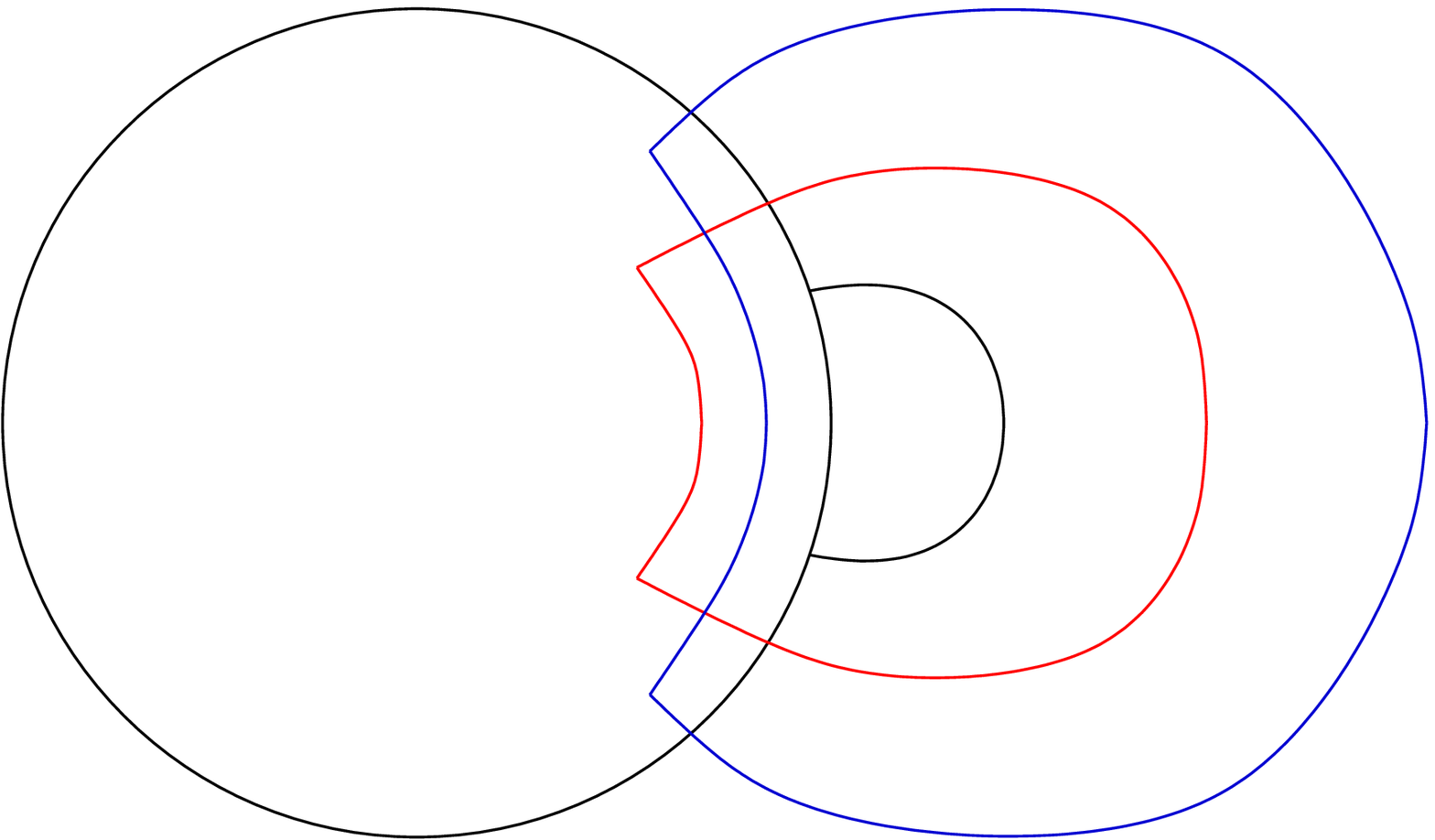}
  \caption{The intersection of $S^2_0$ and $S^2_1$.}
  \label{figure:s-intersection}
\end{figure}

\end{exs}
\subsection{Kirby diagrams of closed $4$-manifolds}
When we turn our attention to \emph{closed} $4$-manifolds, we also
need to comprehend the effect of attaching $3$-handles.
Regard the $3$-handles and the single $4$-handle as a $0$-handle
with $1$-handles attached. What does this handlebody look like?

Let us first consider the $3$-dimensional situation.
When we attach a single untwisted $3$-dimensional $1$-handle to a $3$-ball,
we obtain a solid torus $S^1\times D^2$. Attaching a further
$1$-handle amounts to starting with two solid tori
and identifying a pair of $2$-discs, one each in the
boundaries of the solid tori.
This construction is called a \emph{boundary connected sum},
denoted by~$\natural$. The effect on the boundary is that of a connected
sum in the old sense.
Repeating this process with
further $1$-handles ($g$ in total), we obtain the manifold
$\natural_g S^1\times D^2$ with boundary $\#_g S^1\times S^1=\Sigma_g$.

Returning to the $4$-dimensional situation, when we attach $g$ (untwisted)
$1$-handles to a $4$-ball, we obtain $\natural_g S^1\times D^3$
with boundary $\#_g S^1\times S^2$. The felicitous fact alluded to before is
the following theorem due to Laudenbach and Po\'enaru~\cite{lapo72}.

\begin{thm}
Any diffeomorphism of $\#_g S^1\times S^2$ extends to one
of\/ $\natural_g S^1\times D^3$.
\end{thm}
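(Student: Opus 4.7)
The plan is to reduce the theorem to showing that $\pi_0\,\mathrm{Diff}(M)$, where $M:=\#_g S^1\times S^2$, is generated by mapping classes that obviously extend across $W:=\natural_g S^1\times D^3$. Manifestly extendable generators include permutations of the $g$ summands, orientation-reversals of individual $S^1$-factors, and handle-slides of one 1-handle over another; each such class arises from an isotopy of the attaching data of the 1-handles of $W$ and therefore extends by construction.

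First I would fix the standard sphere system $\Sigma=S_1\sqcup\cdots\sqcup S_g\subset M$, where $S_i$ is the belt sphere of the $i$-th 1-handle of $W$. Each $S_i$ bounds in $W$ the cocore 3-disc $D^3_i$ of that 1-handle, so once a diffeomorphism $\phi\in\mathrm{Diff}(M)$ preserves each $S_i$ setwise, extension across the $i$-th 1-handle reduces to extending $\phi|_{S_i}$ over $D^3_i$; this is always possible by the Alexander trick (smoothly, by Cerf's theorem $\Gamma_4=0$). The task is thus to modify $\phi$, by composition with extendable diffeomorphisms, until $\phi(\Sigma)=\Sigma$.

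This normalisation is the heart of the matter. I would first put $\phi(\Sigma)$ in general position with respect to $\Sigma$ and run an innermost-disc argument on $\Sigma$: an innermost circle of $\phi(\Sigma)\cap\Sigma$ on $\Sigma$ bounds a disc there which, glued to a sub-disc of $\phi(\Sigma)$, yields an auxiliary 2-sphere. Since $\pi_2(M)\cong\Z^g$ is generated as a $\Z[\pi_1 M]$-module by the classes $[S_i]$, this sphere is either nullhomotopic---hence bounds a 3-ball by the 3-dimensional Schoenflies theorem---or is parallel to some $S_j$; in either case the innermost intersection can be eliminated. Iterating, one reaches $\phi(\Sigma)\cap\Sigma=\varnothing$. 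Then Laudenbach's uniqueness result for essential sphere systems in $\#_g S^1\times S^2$ identifies any two disjoint maximal such systems up to the extendable generators above, producing an extendable $\psi$ with $\psi\phi(\Sigma)=\Sigma$. Finally, extending each $\psi\phi|_{S_i}$ over $D^3_i$ via the Alexander trick completes the construction.

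The hard part will be this sphere-system normalisation: the passage from ``$\phi(\Sigma)$ is an arbitrary embedded sphere system in the correct homotopy class'' to ``$\phi(\Sigma)=\Sigma$ up to extendable diffeomorphisms''. This is essentially Laudenbach's theorem that homotopic essential sphere systems in $\#_g S^1\times S^2$ are isotopic, which requires delicate three-dimensional cut-and-paste together with control of $\pi_2(M)$ as a $\pi_1(M)$-module; all remaining steps are either formal book-keeping or standard appeals to low-dimensional smoothing theorems.
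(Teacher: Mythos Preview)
The paper does not prove this theorem at all; it is simply quoted as a result of Laudenbach and Po\'enaru and then used as a black box to justify omitting $3$- and $4$-handles from Kirby diagrams. So there is no proof in the paper against which to compare your proposal.

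As a sketch of the Laudenbach--Po\'enaru argument your outline is in the right spirit, but the endgame is not correct as written. First, once $\psi\phi$ preserves the sphere system~$\Sigma$, ``extending $\psi\phi|_{S_i}$ over the cocore $D^3_i$'' does not extend $\psi\phi$ over~$W$: you have only defined a map on a collection of $3$-discs, not on the $0$-handle $D^4$ or on the full $1$-handles $D^1\times D^3$. Extending over the $0$-handle requires Cerf's $\Gamma_4=0$ applied to the induced diffeomorphism of the complementary punctured $S^3$; extending $\phi|_{S_i}$ over $D^3_i$ is Smale's theorem on $\mathrm{Diff}(S^2)$, not Cerf. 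Second, and more substantively, your list of ``obviously extendable'' generators (permutations, spins, slides) does \emph{not} generate $\pi_0\,\mathrm{Diff}(M)$: one also needs the \emph{sphere twists}, supported in a collar $S_i\times[-1,1]$ and given by $(x,t)\mapsto(R_{\alpha(t)}x,t)$ for a path $\alpha$ from $0$ to $2\pi$ in $\mathrm{SO}(3)$. Since $\pi_1\mathrm{SO}(3)=\Z_2$, these are nontrivial mapping classes, and your sphere-system normalisation does not eliminate them---a diffeomorphism fixing $\Sigma$ can still differ from the identity by sphere twists. Verifying that a sphere twist extends over the corresponding $1$-handle (by rotating the whole $3$-disc rather than just its boundary sphere) is precisely the observation that makes the Laudenbach--Po\'enaru theorem work, and it is the step your outline omits.
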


By the argument as in Section~\ref{section:intro}~(H3), this implies that a
closed $4$-manifold is determined, up to diffeomorphism~(!), by the $1$- and
$2$-handles.
Beware that the boundary of a $4$-dimensional $2$-handlebody will not,
in general, be diffeomorphic to $\#_g S^1\times S^2$,
so not every $2$-handlebody corresponds to a closed $4$-manifold.
If, however, the boundary is indeed of that form, the $2$-handlebody
represents a unique closed smooth manifold.

In stark contrast with dimensions two and three, there is
a subtle difference between the classification of topological
and smooth $4$-manifolds, see \cite{gost99} and~\cite{kirb89}.
Some topological $4$-manifolds do not admit
any smooth structures; others, infinitely many. Strikingly,
while Euclidean spaces in dimensions other than four admit
a unique smooth structure,
$\R^4$ admits uncountably many distinct smooth structures, up
to diffeomorphism.

It therefore deserves to be emphasised that a Kirby diagram
of a closed $4$-manifold represents a unique smooth
$4$-manifold up to diffeomorphism.

\begin{exs}
(1) The $4$-sphere is given by gluing two $4$-balls along their boundary,
that is, $S^4=h_0\cup h_4$. So the empty Kirby diagram, read as a diagram of
a closed $4$-manifold, represents~$S^4$.

\vspace{1mm}

(2) The Kirby diagram containing a single $1$-handle describes
$h_0\cup h_1=S^1\times D^3$ with boundary $S^1\times S^2$.
Thus, in the corresponding closed $4$-manifold we must
have a single $3$-handle, and the gluing of
the two copies $h_0\cup h_1$
and $h_3\cup h_4$ of $S^1\times D^3$ yields $S^1\times S^3$.

\vspace{1mm}

(3) The diagram in Figure~\ref{figure:d2bundle1} with $m=1$
represents the complex projective plane $\CP^2$ with its
natural orientation coming from the complex structure.
This can be seen as follows. The complement of $\CP^1\subset\CP^2$
is an open $4$-ball: simply take $\CP^1$ to be the complex line at infinity. 
A neighbourhood of this complex line $\CP^1\cong S^2$ is
a $D^2$-bundle over~$S^2$. Consequently, $\CP^2$ is the result
of attaching a $4$-handle to this bundle space. The Euler number of
the disc bundle in question being $m=1$ corresponds with the fact
that $\CP^1\subset\CP^2$ has self-intersection equal to~$1$.

Similarly, the diagram in Figure~\ref{figure:d2bundle1}
with $m=-1$ represents~$\overline{\CP}^2$, the complex projective
plane with the opposite of its natural orientation.

\vspace{1mm}

(4) Decompose the $2$-sphere as $S^2=D^2_-\cup_{S^1}D^2_+$. Then the
product $4$-manifold $S^2\times S^2$ splits as
\begin{eqnarray*}
S^2\times S^2 & = & \bigl(D^2_-\cup_{S^1}D^2_+\bigr)\times
                    \bigl(D^2_-\cup_{S^1}D^2_+\bigr)\\
              & = & \bigl(D^2_-\times D^2_-\bigr)\cup_{S^1\times D^2_-}
                    \bigl(D^2_+\times D^2_-\bigr)\cup_{D^2_-\times S^1}
                    \bigl(D^2_-\times D^2_+\bigr)\\
              &   & \hphantom{xxx}
                    \cup_{D^2_+\times S^1\cup_{S^1\times S^1}S^1\times D^2_+}
                    \bigl(D^2_+\times D^2_+\bigr)\\
              & = & h_0\cup h_2\cup h_2\cup h_4,
\end{eqnarray*}
analogous to the handle decomposition of the $2$-torus $S^1\times S^1$.
The attaching circles of the $2$-handles are $S^1\times\{0\}$ and
$\{0\}\times S^1$ in $\partial(D^2_-\times D^2_-)=\partial D^4=S^3$. These
are unknotted circles, since the obvious discs they bound in $D^4$ can
be pushed to the boundary~$S^3$. You may convince yourself
that the attaching circles are linked as shown in
Figure~\ref{figure:hopf-link}; this is called the \emph{Hopf link}.

\begin{figure}[h]
\labellist
\small\hair 2pt
\pinlabel $0$ [br] at 43 247
\pinlabel $0$ [bl] at 473 177
\endlabellist
\centering
\includegraphics[scale=.25]{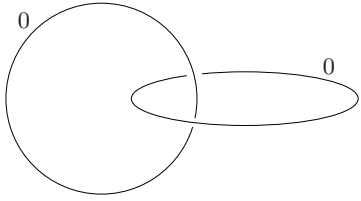}
  \caption{The Kirby diagram for $S^2\times S^2$.}
  \label{figure:hopf-link}
\end{figure}

The two $2$-spheres corresponding to the $2$-handles,
\[ \bigl(D^2_-\times\{0\}\bigr)\cup_{S^1\times\{0\}}
\bigl(D^2_+\times\{0\}\bigr)
=S^2\times\{0\}\]
and $\{0\}\times S^2$ intersect in the single point $(0,0)$. This
is consistent with the argument in Figure~\ref{figure:s-intersection},
since either unknot in the Hopf link intersects the obvious disc
bounded by the other one transversely in a single point.
The attaching of either $2$-handle produces a trivial $D^2$-bundle
$S^2\times D^2_-$ or $D^2_-\times S^2$, respectively, which explains
the Euler numbers~$0$ in Figure~\ref{figure:hopf-link}.
\end{exs}

If one wants to study the topology of $4$-manifolds
with the help of Kirby diagrams,
one needs to understand handle slides of $2$-handles. When the
attaching circle of one $2$-handle moves across another $2$-handle,
the attaching circle twists around that of the second handle
according to the Euler number of the second attaching circle,
and its Euler number (or, to use the correct term in this
more general context, its `framing') changes by that second Euler
number. You may try your hand with the following example, which is
the $4$-dimensional analogue of Proposition~\ref{prop:surfaces}.

\begin{prop}
\label{prop:dim4}
The $4$-manifolds $(S^2\times S^2)\#\overline{\CP}^2$ and
$\CP^2\#\overline{\CP}^2\#\overline{\CP}^2$ are diffeomorphic.
\end{prop}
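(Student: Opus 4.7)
The plan is to prove this via Kirby calculus: draw Kirby diagrams for both sides and find a sequence of handle slides transforming one into the other (by the Laudenbach--Po\'enaru theorem cited above, this determines the diffeomorphism type of the closed manifold). Combining the diagrams from the preceding examples, $(S^2\times S^2)\#\overline{\CP}^2$ is represented by the Hopf link with both components framed $0$, together with a split unknot of framing $-1$; call these three attaching circles $A$, $B$ (the Hopf link, framings $0,0$) and $C$ (split, framing~$-1$). The target $\CP^2\#\overline{\CP}^2\#\overline{\CP}^2$ is represented by three pairwise split unknots with framings $+1,-1,-1$.

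The key is a three-slide sequence, using the standard handle slide rule: sliding $K_1$ over $K_2$ with sign $\epsilon\in\{+1,-1\}$ replaces $K_1$ by the band sum $K_1+\epsilon K_2'$, where $K_2'$ is the pushoff of $K_2$ determined by its framing~$f_2$; the new framing is $f_1+f_2+2\epsilon\,\mathrm{lk}(K_1,K_2)$, the new linking with a third component $L$ is $\mathrm{lk}(K_1,L)+\epsilon\,\mathrm{lk}(K_2,L)$, and the new linking with $K_2$ itself is $\mathrm{lk}(K_1,K_2)+\epsilon f_2$. First, slide $C$ over $B$ with $\epsilon=+1$: the new $C'$ keeps framing $-1$, picks up linking $+1$ with $A$, and has $\mathrm{lk}(C',B)=0+f_B=0$, so $C'$ splits from $B$ and is Hopf linked with $A$. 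Second, slide $A$ over $C'$ with $\epsilon=+1$: the framing of $A'$ becomes $0+(-1)+2=+1$, and $\mathrm{lk}(A',C')=1+f_{C'}=0$, so $A'$ splits from $C'$ while still Hopf linked with $B$. Third, slide $B$ over $A'$ with $\epsilon=-1$: $\mathrm{lk}(B'',A')=1-f_{A'}=0$, so $B''$ splits from $A'$, and its framing becomes $0+1-2=-1$. The resulting diagram is three pairwise unlinked, unknotted circles with framings $(+1,-1,-1)$~--- exactly the Kirby diagram of $\CP^2\#\overline{\CP}^2\#\overline{\CP}^2$.

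The main obstacle will be bookkeeping: keeping careful track of orientations, signs and linking numbers through each slide, and choosing each band so that the intermediate attaching circles remain unknotted. The latter can be arranged by first isotoping the two components involved in each slide into a suitable standard position (so that the band sum is along a simple, untwisted arc) before performing the slide. Once these checks are in place, the three slides above complete the proof.
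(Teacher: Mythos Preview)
Your argument is correct: the three handle slides you describe do transform the Kirby diagram of $(S^2\times S^2)\#\overline{\CP}^2$ into that of $\CP^2\#\overline{\CP}^2\#\overline{\CP}^2$, and your bookkeeping of framings and linking numbers checks out at each step. Your closing remark about arranging the bands so that each intermediate curve stays unknotted is the right caveat, and is easily verified here since all components are unknots lying in a plane.

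As for comparison: the paper does not actually supply a proof of this proposition. It is stated immediately after the sentence ``You may try your hand with the following example, which is the $4$-dimensional analogue of Proposition~\ref{prop:surfaces}'', i.e.\ it is deliberately left as an exercise in the Kirby calculus just introduced. Your solution is precisely in the intended spirit --- a short sequence of $2$-handle slides mirroring the three $1$-handle slides of Figure~\ref{figure:proof-diagram} --- so there is nothing to contrast.
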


For a comprehensive introduction to `Kirby calculus', the art of
manipulating Kirby diagrams, see~\cite{gost99}.
\section{Dimension five: open books}
\label{section:dim5}
An open book decomposition of an $n$-manifold $M$ is a way to write $M$
as a collection of $(n-1)$-dimensional diffeomorphic
submanifolds with boundary,
called the pages, which are bound together along their common boundary.
Take a look at Figure~\ref{figure:s3-book}, which is essentially
the Heegaard splitting of $S^3$ from Figure~\ref{figure:s3-heegaard1a},
except that the discs making up the solid torus $V_1$ in this splitting
have now been extended until they touch the soul $\{0\}\times S^1$
of the second solid torus $V_2\cong D^2\times S^1$. Notice that along that
circle, the discs come together like the pages of a book at the binding,
see Figure~\ref{figure:binding}.

\begin{figure}[h]
\labellist
\small\hair 2pt
\endlabellist
\centering
\includegraphics[scale=.4]{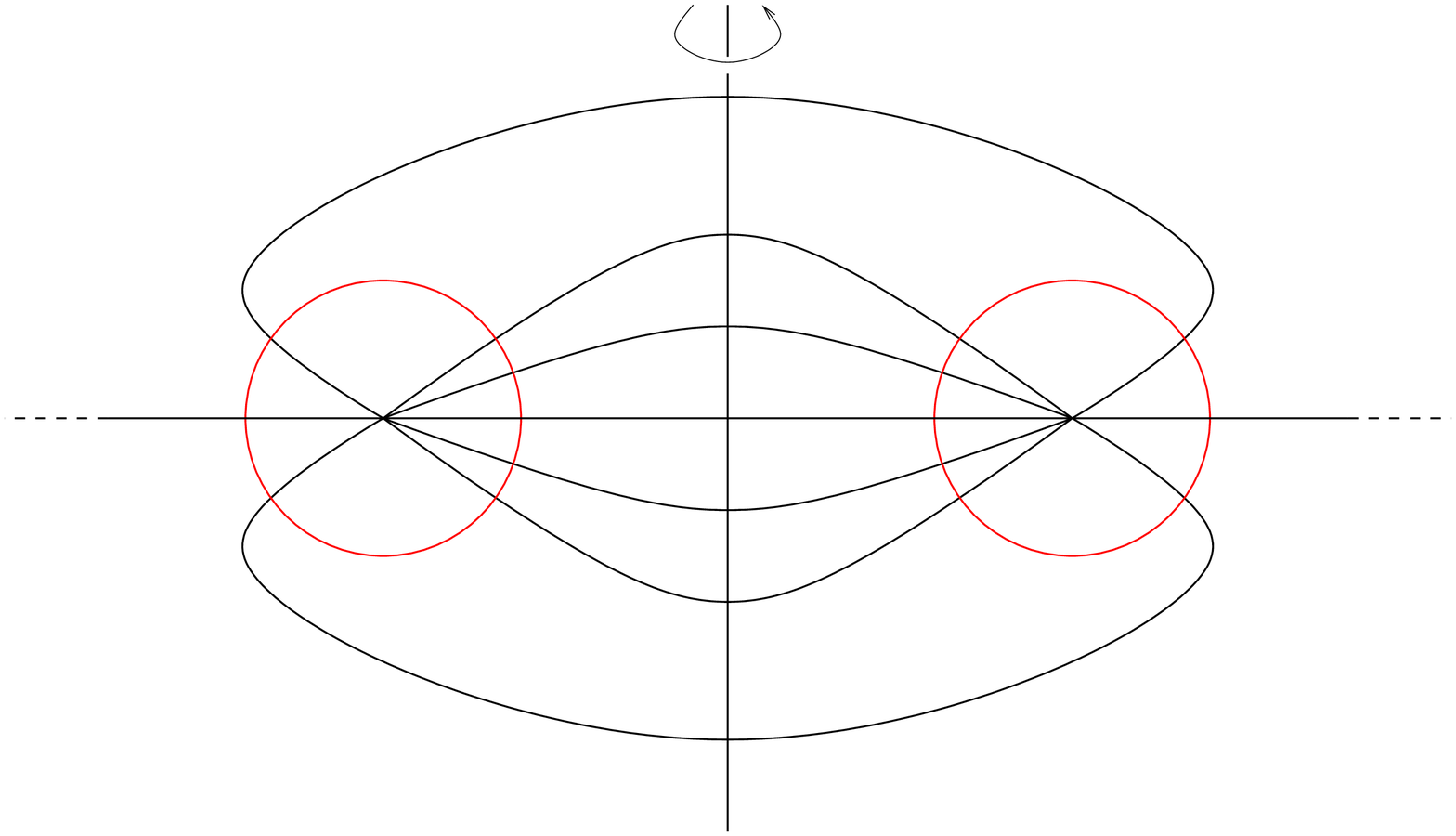}
  \caption{An open book decomposition of $S^3$.}
  \label{figure:s3-book}
\end{figure}

\begin{figure}[h]
\labellist
\small\hair 2pt
\pinlabel $S^1$ [r] at 237 105
\pinlabel $B$ [l] at 378 14
\pinlabel $\frakp^{-1}(\varphi)$ [l] at 386 435
\endlabellist
\centering
\includegraphics[scale=0.3]{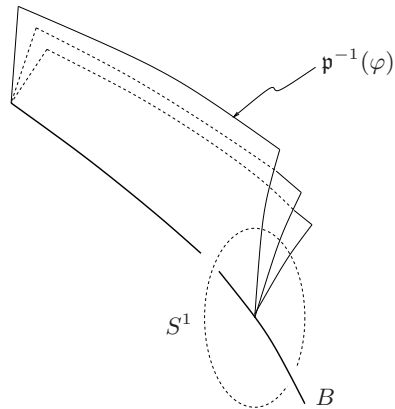}
  \caption{An open book near the binding.}
  \label{figure:binding}
\end{figure}

Here is one way to interpret this picture, giving rise to
the first of two equivalent definitions of an open book decomposition.

\begin{defn}
An \emph{open book decomposition} of a manifold $M$ consists of
a codimension~$2$ submanifold $B\subset M$, called the \emph{binding},
whose neighbourhood in $M$ is required to be of the form $B\times D^2$,
and a locally trivial fibration $\frakp\co M\setminus B\rightarrow S^1$,
which on $B\times\bigl(D^2\setminus\{0\}\bigr)$ is given by the
angular coordinate in the $D^2$-factor. The closure $\Sigma:=
\overline{\frakp^{-1}(\varphi)}$ of any fibre, $\varphi\in S^1$,
which is a codimension~$1$ submanifold of $M$ with boundary
$\partial\Sigma=B$, is called the \emph{page} of the open book.
\end{defn}

This definition makes perfect sense for topological
manifolds, but I shall now assume that we are working
in the differential category.
One may then choose a vector field on $M$ transverse to the pages,
equal to $\partial_{\varphi}$ near the binding, and such that it
projects onto the vector field $\partial_{\varphi}$ on $S^1$
under the differential of~$\frakp$. The time~$2\pi$ map of this
vector field then defines a diffeomorphism $\psi$ of~$\Sigma$,
equal to the identity near the boundary~$\partial\Sigma$.
This is called the \emph{monodromy} of the open book.

This leads to an alternative way of viewing an open book
decomposition. Start with a page $\Sigma$ and a
monodromy diffeomorphism $\psi\co \Sigma\rightarrow\Sigma$,
equal to the identity near~$\partial\Sigma$. Then form the
\emph{mapping torus}
\[ \Sigma\times [0,2\pi]/(x,2\pi)\sim(\psi(x),0).\]
By the assumption on $\psi$ this has boundary $\partial\Sigma\times S^1$,
and we obtain a closed manifold $M$ by gluing in a copy of
$\partial\Sigma\times D^2$. The binding will then be
$B=\partial\Sigma\times\{0\}$, and the projection $\frakp\co M\setminus B
\rightarrow S^1$ is given by the projection onto the second coordinate
in the mapping torus, and onto the angular coordinate
of the $D^2$-factor in $\partial\Sigma\times D^2$.
The page in the sense of the first definition will be $\Sigma$ with
a collar $\partial\Sigma\times [0,1]$ attached to its boundary,
where $1-r\in[0,1]$ corresponds to the radial coordinate $r$ in~$D^2$.

\begin{ex}
In the open book decomposition of $S^3$ in Figure~\ref{figure:s3-book}
we have binding $S^1$, page~$D^2$, and monodromy equal to
$\mathrm{id}_{D^2}$.
\end{ex}

See \cite{wink98} for a beautiful survey on the history of open books
and their applications to cobordism theory, foliations, differential
geometry etc. All odd-dimensional closed manifolds admit an
open book decomposition; manifolds of even dimension $\geq 6$
do so under some algebraic topological assumption, which
in the simply connected case is the vanishing of the signature.
Moreover, the pages in the decomposition of an $n$-dimensional
manifold may be assumed to have the homotopy type of a complex of
dimension $[n/2]$.

For a $5$-dimensional manifold this means that we can always find
an open book decomposition where the pages are $4$-dimensional
$2$-handlebodies. Thus, we have a way to describe the manifold
in terms of a Kirby diagram of the page, provided we can encode
the monodromy diffeomorphism in this picture. This is difficult,
in general, but even the identity map as monodromy
gives rise to many nontrivial examples.

\begin{exs}
(0) For any compact $4$-manifold $\Sigma$ with boundary
and $\psi=\id_{\Sigma}$ we have
\[ M=(\Sigma\times S^1)\cup_{\partial\Sigma\times S^1}
(\partial\Sigma\times D^2)=\partial (\Sigma\times D^2).\]

(1) The empty Kirby diagram can now be read as a picture for
$\partial (D^4\times D^2)= S^5$.

\vspace{1mm}

(2) The diagram in Figure~\ref{figure:s1d3} with a single $1$-handle
and $\psi=\mathrm{id}$ represents 
\[ \partial (S^1\times D^3\times D^2)=S^1\times S^4.\]

(3) Any $S^3$-bundle over $S^2=D^2_-\cup_{S^1}D^2_+$ splits into two trivial
bundles $D^2_-\times S^3$ and $D^2_+\times S^3$, and in the
bundle over $S^2$ the
$S^3$-fibres are glued over corresponding points of the equator
$D^2_-\cap D^2_+$ of the base sphere by an element of
the special orthogonal group
$\mathrm{SO}(4)$ varying continuously with the point on the
equator. It follows that these bundles are classified by the
fundamental group $\pi_1(\mathrm{SO}(4))=\Z_2$, so there is
only the trivial bundle $S^2\times S^3$ and a non-trivial one,
denoted by $S^2\,\tilde{\times}\,S^3$. The Euler number of
$D^2$-bundles over $S^2$ that we discussed earlier can be
interpreted as the corresponding element in $\pi_1(\mathrm{SO}(2))=\Z$.

It follows that
the diagram in Figure~\ref{figure:d2bundle1} with $\psi=\mathrm{id}$
represents
\begin{eqnarray*}
M & = & \partial\bigl( (\text{$D^2$-bundle over $S^2$})
        \times D^2\bigr)\\
  & = & \partial(\text{$D^4$-bundle over $S^2$})\\
  & = & \begin{cases}
        S^2\times S^3             & \text{for $m$ even},\\
        S^2\,\tilde{\times}\, S^3 & \text{for $m$ odd}.
        \end{cases}
\end{eqnarray*}
\end{exs}
\section{Contact structures on open books}
I now want to show how one can use contact geometry to
achieve a further dimensional reduction: $5$-manifolds
admitting a contact structure have open book
decompositions whose pages can be represented by $2$-dimensional
diagrams; if the monodromy can also be encoded
in the diagram, this gives a complete description of
the $5$-manifold. At least in principle this opens the possibility
that some $7$-dimensional contact manifolds can be represented by
$3$-dimensional diagrams.

It is not my intention to give an introduction to contact geometry;
for that I refer the reader to \cite{geig01} or~\cite{geig08}.
For a more detailed survey of the topics in this section
see~\cite{geig12}.
My sole aim here is to explain how contact geometry can be
used to simplify at least certain Kirby diagrams to drawings in
the $2$-plane.

\begin{defn}
The \emph{standard contact structure} on $\R^{2n+1}$
with cartesian coordinates $(x_1,y_1,\ldots,x_n,y_n,z)$ is
the hyperplane field $\xist=\ker(dz+\sum_i x_i\, \rmd y_i)$.
In other words, $\xist$ is spanned by the $2n$
vector fields $\partial_{x_i}, \partial_{y_i}-x_i\,\partial_z$, $i=1,
\ldots,n$.

A \emph{contact manifold} is an odd-dimensional manifold with
a tangent hyperplane field that looks locally like~$\xist$.
\end{defn}

The $3$-sphere $S^3\subset\C^2$ carries a natural contact structure, viz.\
the tangent field of real $2$-planes invariant under
the complex structure. The fact that this plane field is
a contact structures corresponds to what complex geometers
call the strict pseudoconvexity of $S^3$ in $\C^2$. On the
complement of a single point, this contact structure is
diffeomorphic to $(\R^3,\xist)$.

Among knots in $(\R^3,\xist)$ there is the distinguished class of
\emph{Legendrian knots} tangent to the plane field~$\xist$.
Consider a parametrised curve
\[ t\longmapsto\gamma(t)=\bigl(x(t),y(t),z(t)\bigr).\]
The condition for $\gamma$ to be Legendrian is $\dot{z}+x\dot{y}\equiv 0$.
Hence, if $\dot{y}(t)=0$ then $\dot{z}(t)=0$. This means that the
so-called \emph{front projection} $\gamma_{\mathrm{F}}(t)=
\bigl(y(t),z(t)\bigr)$ does not have any vertical tangencies,
but singular points instead. Away from
those singularities, the missing $x$-coordinate can be recovered from
$\gamma_{\mathrm{F}}$ via
\[ x(t)=-\frac{\dot{z}(t)}{\dot{y}(t)}=-\frac{\rmd z}{\rmd y},\]
that is, as the negative slope of the front projection in the
$yz$-plane. By a $C^2$-small perturbation of $\gamma$ one can achieve
that the front projection $\gamma_{\mathrm{F}}$ has only
isolated semi-cubical cusp singularities, i.e.\ around
a cusp at $t=0$ the curve $\gamma$ looks like
\[ \gamma(t)=(t,-t^2,2t^3/3),\]
see Figure~\ref{figure:cusp}.

\begin{figure}[h]
\labellist
\small\hair 2pt
\endlabellist
\centering
\includegraphics[scale=.35]{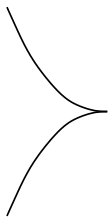}
  \caption{A cusp in the front projection of a Legendrian curve.}
  \label{figure:cusp}
\end{figure}

Notice that at a crossing of two curves in the front projection, the
strand with the smaller slope will be above that with the larger one.
This is illustrated in Figure~\ref{figure:LRlegtrefoil},
which shows front projections of Legendrian realisations
of the left-handed and right-handed trefoil knot
in Figure~\ref{figure:LRtrefoil}. In other words, it is superfluous
to indicate the under- and overcrossings in the
front projection, so we really have a strictly $2$-dimensional
picture of a knot in $3$-space.

\begin{figure}[h]
\labellist
\small\hair 2pt
\endlabellist
\centering
\includegraphics[scale=.4]{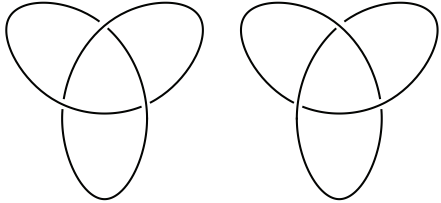}
  \caption{The left-handed and right-handed trefoil knot.}
  \label{figure:LRtrefoil}
\end{figure}

\begin{figure}[h]
\labellist
\small\hair 2pt
\endlabellist
\centering
\includegraphics[scale=.44]{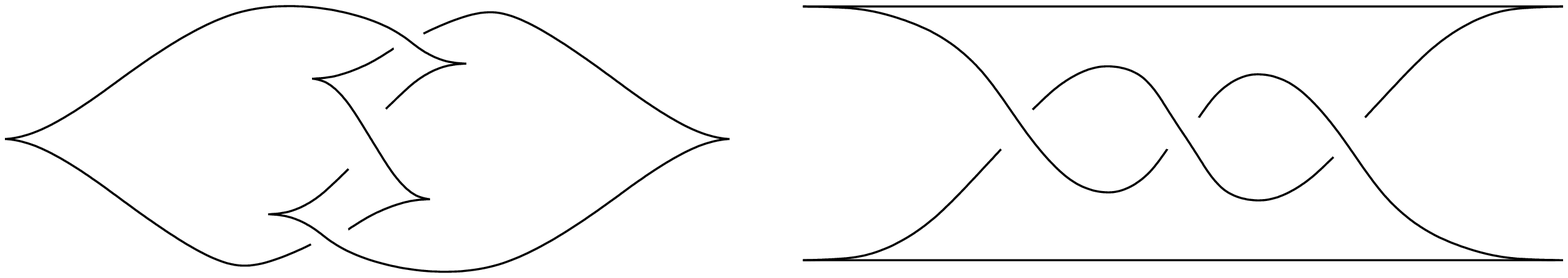}
  \caption{Front projections of Legendrian trefoils.}
  \label{figure:LRlegtrefoil}
\end{figure}

A Legendrian curve $\gamma$ in a $3$-dimensional
contact manifold has a natural
parallel curve, given by pushing $\gamma$ in a direction transverse
to the contact structure. For a Legendrian curve in
$(\R^3,\xist)$ this simply corresponds to
pushing its front projection in the $z$-direction. This parallel
curve (the `contact framing'), with one extra
left-handed twist, defines a preferred way of attaching
a $2$-handle to the $4$-ball, and it is the one for which
the complex structure on the $4$-ball extends to a Stein structure
on the $2$-handlebody.

Recall that a \emph{Stein manifold} is a complex manifold that admits
a proper holomorphic embedding into some complex affine space $\C^N$.
These admit exhausting Morse functions with strictly pseudoconvex
regular level sets. Sublevel sets of this Morse function are called
\emph{Stein domains}, and this is what I mean by a Stein structure on
a $2$-handlebody.

A closed contact manifold $(M,\xi)$ is called
\emph{Stein fillable} if there is a Stein domain $W$ with boundary
$\partial W=M$ such that the tangent hyperplane field
on $M$ invariant under the complex structure coincides with~$\xi$.
The homotopical dimension of a Stein domain is always at most half
of its topological dimension. If the homotopical dimension
is smaller than this maximum, the Stein domain is called
\emph{subcritical}.

By a deep result of Giroux~\cite{giro02}, there is an intimate
relation between contact structures and open books. In
dimension five this says that any
contact manifold has an open book decomposition where the pages
are $4$-dimensional Stein domains, i.e.\
$2$-handlebodies with the $2$-handles
attached along Legendrian knots, using the preferred framing.
Thus, contact geometry leads to a further simplification:
the attaching of a $2$-handle is encoded in the
Legendrian knot itself, so we no longer need to keep
track of framings under handle slides.

A theory of diagrams for contact $5$-manifolds has been developed
in~\cite{dgk12}. Here are some sample applications of this theory.

\begin{app}
(1) One can prove equivalences between contact manifolds.
For instance, the manifolds $(S^2\times S^3)\#(S^2\,\tilde{\times}\, S^3)$
and $(S^2\,\tilde{\times}\, S^3)\#(S^2\,\tilde{\times}\, S^3)$
(with appropriate contact structures) are diffeomorphic.
Even for the purely topological statement, the handle moves simplify by
taking the contact geometric viewpoint.

\vspace{1mm}

(2) The diagrams can be used for a partial classification of
subcritically Stein fillable contact $5$-manifolds.
For a further discussion of this issue see~\cite{dgz17}.

\vspace{1mm}

(3) Contact $5$-manifolds resulting from open books
with homeomorphic but non-diffeomorphic pages
have been studied in \cite{ozko15} and~\cite{akya15}.

\vspace{1mm}

(4) One can give a diagrammatic proof of the result, first shown
in~\cite{geig91}, that every simply connected $5$-manifold
admits a contact structure, provided a certain obvious
topological condition (reduction of the structure group of the
tangent bundle to the unitary group~$\mathrm{U}(2)$) is satisfied.

\vspace{1mm}

(5) A purely topological application is the following.
Every simply connected $5$-dimensional spin manifold (that is, a manifold
with vanishing second Stiefel--Whitney class) is a double branched cover
of~$S^5$. This result depends in an essential way on open
books with non-trivial monodromy. When a $2$-handle is attached
along an unknot, one can define a Dehn twist along the
resulting $2$-sphere in the handlebody. It turns out that all
the $5$-manifolds in question have an open book decomposition whose
monodromy is a square $\psi^2$ of a diffeomorphism $\psi$
composed of such Dehn twists, and that the open book
with the same page but monodromy $\psi$ is diffeomorphic to~$S^5$.
This immediately gives the claimed branched cover description,
with the binding as the branching set.
\end{app}
\begin{ack}
I am grateful to Peter Albers and Marc Kegel for their comments on
a draft version of this paper. Special thanks to Guido Sweers
for creating Figures \ref{figure:rp2}, \ref{figure:3-dim-1-handle}
and~\ref{figure:LRtrefoil}.
The research of the author is supported by the SFB/TRR 191
`Symplectic Structures in Geometry, Algebra and Dynamics', funded by the
Deutsche Forschungsgemeinschaft.
\end{ack}

\end{document}